\begin{document}
\title{Matroids arising from electrical networks}
\author{Bob Lutz}
\address{Comcast Technology Center, 1800 Arch St., Philadelphia, PA 19103}
\email{bob\_lutz@comcast.com}
\thanks{Work of the author was partially supported by NSF grants DMS-1401224 and DMS-1701576; and by NSF grant DMS-1440140 while the author was in residence at the Mathematical Sciences Research Institute in Berkeley, California during the fall 2019 semester.}
\date{\today}
\subjclass[2020]{05B35, 05C22 (Primary) 05C15, 05C40, 34B45 (Secondary)}

\begin{abstract}
This paper introduces \emph{Dirichlet matroids}, a generalization of graphic matroids arising from electrical networks. We present four main theorems. First, we exhibit a matroid quotient involving geometric duals of networks embedded in surfaces with boundary. Second, we characterize the Bergman fans of Dirichlet matroids as subfans of graphic Bergman fans. Third, we prove an interlacing result on the real zeros and poles of the trace of the response matrix. And fourth, we bound the coefficients of the precoloring polynomial of a network by the coefficients of the associated chromatic polynomial.
\end{abstract}

\maketitle

\section{Introduction}

An \emph{electrical network} (or simply a \emph{network}) $D=(G,\B)$ consists of a finite connected graph $G=(V,E)$ and a nonempty set $\B\subseteq V$ called the \emph{boundary}. This paper associates a matroid $M(D)$ to any network $D$ as follows.

Let $\eo=E\cup\eh$, where $\eh$ is an extra element not in $E$. A \emph{grove} of $D$ is a nonempty acyclic set $F\subseteq E$ such that $F$ meets every interior vertex, and every component of $F$ meets the boundary. Let $M(D)$ denote the matroid on $\eo$ whose bases are all sets $B\subseteq \eo$ of the following types:
\begin{enumerate}[(i)]
	\item $B=F$ for some grove $F$ of $D$ containing
		exactly one path between boundary vertices
	\item $B=F\cup\eh$ for some grove $F$ of $D$
		containing no path between boundary vertices.
\end{enumerate}
A \emph{Dirichlet matroid} is any matroid of the form $M(D)$.

For example, the \emph{star network} $D=S_d$ is the network on $d$ vertices illustrated on the left side of Figure \ref{fig:introstar}. Here we have a single interior vertex $i$ and an edge $ij$ for every $j\in \B$. The bases of $M(D)$ are the 2-element subsets of $\eo$. Hence the Dirichlet matroid $M(D)$ is isomorphic to the uniform matroid $U_{2,d}$, i.e. the $d$-point line.

\begin{figure}[ht]
	\centering
	\begin{tikzpicture}[scale=1.3]
	\coordinate (a) at (0,0);
	\coordinate (b1) at (210:1);
	\coordinate (b2) at (150:1);
	\coordinate (b3) at (90:1);
	\coordinate (b4) at (30:1);
	\coordinate (b5) at (-30:1);
	\draw (a)--(b1);
	\draw (a)--(b2);
	\draw (a)--(b3);
	\draw (a)--(b4);
	\draw (a)--(b5);
	\draw[fill=black] (270:0.5) circle (0.4pt);
	\draw[fill=black] (250:0.5) circle (0.4pt);
	\draw[fill=black] (290:0.5) circle (0.4pt);
	\draw[fill=black] (a) circle (2.5pt);
	\draw[fill=white] (b1) circle (2.5pt);
	\draw[fill=white] (b2) circle (2.5pt);
	\draw[fill=white] (b3) circle (2.5pt);
	\draw[fill=white] (b4) circle (2.5pt);
	\draw[fill=white] (b5) circle (2.5pt);
	
	\def\a{2.5}
	\coordinate (c) at (\a,0.25);
	\coordinate (d) at ({\a+1},0.25);
	\coordinate (e) at ({\a+2},0.25);
	\coordinate (f1) at ({\a+2.45},0.25);
	\coordinate (f11) at ({\a+2.46},0.32);
	\coordinate (f12) at ({\a+2.44},0.273);
	\coordinate (f13) at ({\a+2.46},0.227);
	\coordinate (f14) at ({\a+2.44},0.18);
	\coordinate (f2) at ({\a+2.95},0.25);
	\coordinate (f21) at ({\a+2.46+0.5},0.32);
	\coordinate (f22) at ({\a+2.44+0.5},0.273);
	\coordinate (f23) at ({\a+2.46+0.5},0.227);
	\coordinate (f24) at ({\a+2.44+0.5},0.18);
	\coordinate (g) at ({\a+3.4},0.25);
	\coordinate (h) at ({\a+4.4},0.25);
	\draw (c) -- (f1);
	\draw (f2) -- (h);
	\draw[fill=black] ({\a+2.58},0.25) circle (0.4pt);
	\draw[fill=black] ({\a+2.7},0.25) circle (0.4pt);
	\draw[fill=black] ({\a+2.82},0.25) circle (0.4pt);
	\draw plot [smooth, tension = 1] coordinates {(f11) (f12) (f13) (f14)};
	\draw plot [smooth, tension = 1] coordinates {(f21) (f22) (f23) (f24)};
	\draw[fill=black] (c) circle (2.5pt);
	\draw[fill=black] (d) circle (2.5pt);
	\draw[fill=black] (e) circle (2.5pt);
	\draw[fill=black] (g) circle (2.5pt);
	\draw[fill=black] (h) circle (2.5pt);
\end{tikzpicture}
	\caption{A star network with boundary vertices marked in white, left;
		an affine diagram of its Dirichlet matroid, right.}
	\label{fig:introstar}
\end{figure}

Dirichlet matroids arise in several ways. Classically, they are the complete principal truncations of graphic matroids along edge sets of cliques (Appendix \ref{sec:appa}). In the language of biased graphs, they are the complete lift matroids of almost-balanced biased graphs (Appendix \ref{sec:printrun}). Our interest is motivated by \emph{Dirichlet arrangements}, a class of hyperplane arrangements introduced in \cite{lutz2019hyp}. The simple Dirichlet matroids are precisely the matroids of Dirichlet arrangements (Example \ref{eg:dirarr}).

Our first theorem generalizes a result on geometric duals of cellularly embedded graphs in surfaces. Let $\Sigma$ be a surface with boundary. An \emph{embedding} of $D$ in $\Sigma$ is an embedding of $\g$ in $\Sigma$ that satisfies certain conditions on the faces and boundary vertices (Definition \ref{def:embedding}). For example, a network is \emph{circular} or \emph{cylindrical} if it is embedded in a closed disk or cylinder, respectively. If $D$ is embedded in $\Sigma$, then one can construct a geometric dual network $D^*$ also embedded in $\Sigma$ (Definition \ref{def:dual}).

Let $M^*(D)$ denote the dual of $M(D)$. If $|\B|\geq 2$, then the ranks of $M^*(D)$ and $M(D^*)$ differ by $|\B|-\chi(\Sigma)-1$, so the two matroids are not isomorphic in general. However, they are closely related. Given matroids $M$ and $N$ on the same ground set, we say that $N$ is a \emph{quotient} of $M$ if every flat of $N$ is a flat of $M$. Matroid quotients are combinatorial abstractions of quotient spaces in linear algebra.

\begin{thm}
Let $D$ be a network embedded in a surface with boundary. If $|\B|\geq 2$, then $M(D^*)$ is a quotient of $M^*(D)$.
\label{thm:dualintro}
\end{thm}

When $|\B|=2$, we recover the special case mentioned above: if $\g$ is cellularly embedded in a surface, then the graphic matroid $M(\g^*)$ is a quotient of $M^*(\g)$  \cite[Lemma 1]{richter1984}. While circular and cylindrical networks have been studied previously \cite{kenyon2017, lam2012}, the author is unaware of any prior results on networks embedded in general surfaces with boundary.

Our second theorem is a tropical characterization of Dirichlet matroids. The \emph{Bergman fan} of a loopless matroid $M$, denoted by $\berg{M}$, is a polyhedral fan supported on the intersection of the tropical hyperplanes defined by the circuits of $M$ (Definition \ref{def:berg}). Just as $M$ is defined by its bases or circuits, it is also defined by $\berg{M}$. For our purposes, an \emph{isomorphism} of polyhedral fans is a piecewise-linear homeomorphism between their supports that maps cones onto cones.

\begin{thm}
Let $\mg$ denote the graph obtained from $\g$ by adding an edge between each pair of boundary vertices. If $\g$ is loopless, then there is an isomorphism of polyhedral fans
\begin{equation}
	\berg{M(D)}\to \berg{M(\mg)}_{\B},
\end{equation}
where $\berg{M(\mg)}_{\B}$ is the subfan of $\berg{M(\mg)}$ consisting of all cones whose points are constant on the set of edges between boudary vertices.
\label{thm:bergintro}
\end{thm}

For a complete graph $K_d$, the Bergman fan $\berg{M(K_d)}$ can be identified with the space of phylogenetic $d$-trees \cite{ardila2006}. We generalize this result to a family of ``complete'' networks and phylogenetic trees with prescribed sets of equidistant leaves (Proposition \ref{prop:newphylo}). We also compute a minimal tropical basis of $M(D)$ (Proposition \ref{prop:mintrop}).

Our third result concerns the \emph{response matrix} of $D$, which describes the electrical properties of the network. For this, we think of the edges of $\g$ as linear resistors and the boundary vertices as nodes to which voltages are applied, e.g. by batteries. Given an edge $e\in E$, the \emph{conductance} $\con_e\in \R$ measures how easily current may flow through $e$. There is a linear map $\R^{\B}\to \R^{\B}$ that takes the boundary voltages to the resulting currents. The \emph{response matrix} is the matrix $\Lambda=\Lambda(D,\gamma)$ of this map. The entries of $\Lambda$ are homogeneous rational functions in the conductances that can be written in terms of groves.

We are interested in the trace of $\Lambda$. A line in $\R^k$ has \emph{positive direction vector} if it can be parametrized as $t\mapsto x+ty$ for some $x\in \R^k$ and $y\in (0,\infty)^k$.

\begin{thm}
The zeros and poles of the trace of the response matrix $\Lambda$ interlace as the conductances of the network travel along any line in $\R^E$ with positive direction vector.
\label{thm:realintro}
\end{thm}

For networks with $|\B|=2$, the quantity $\tr\Lambda$ has an electrical interpretation: it is twice the \emph{effective conductance} between the boundary vertices \cite[Section 2]{wagner2005}. We show that Theorem \ref{thm:realintro} follows from the \emph{half-plane property} of Dirichlet matroids (Definition \ref{def:hpp}).

Our fourth result concerns the polynomial at the heart of the following coloring problem:
\begin{quote}
How many ways can a given injective $k$-coloring of $\B$ be extended to a proper $k$-coloring of $\g$?
\end{quote}
The answer is $\cp{D}(k)$, where $\cp{D}$ is the \emph{precoloring polynomial} of $D$ \cite[Definition 3.5]{lutz2019hyp}. Precoloring polynomials have been studied in connection with Sudoku puzzles, marked order polytopes and list colorings, but little is known about their behavior in general \cite{herzberg2007, jochemko2014, stanley2015}. Our result compares the polynomial $\cp{D}$ directly to the chromatic polynomial $\cp{\g}$.

\begin{thm}
Assume that $\g$ is loopless, and write the precoloring polynomial of $D$ and the chromatic polynomial of $\g$ as
\begin{equation}
	\begin{aligned}
		\cp{\g}(\l) &= \l^d - a_1\l^{d-1} + \cdots + (-1)^d a_d\\
		\cp{D}(\l) &= \l^n - b_1\l^{n-1} + \cdots + (-1)^n b_n,
	\end{aligned}
\end{equation}
so that all $a_i$ and $b_i$ are nonnegative. We have $a_i\geq b_i$ for $i=1,\ldots,n$, with $a_i = b_i$ if $i$ is less than the minimum number of edges in a path between boundary vertices.
\label{thm:cpintro}
\end{thm}
\section{Basics of Dirichlet matroids}
\label{sec:bias}

In this section, we give cryptomorphic definitions of Dirichlet matroids and provide several examples. We also discuss the connection to hyperplane arrangements and characterize the fields over which a given Dirichlet matroid is representable. We assume some familiarity with matroid theory, as presented in \cite{oxley2006}.

\subsection{Cryptomorphic definitions and key examples}
\label{sec:crypt}

Throughout, let $D=(\g,\B)$, where $\g=(V,E)$ is a finite connected graph and $\B\subseteq V$ is a nonempty set. The elements of $\B$ are called the \emph{boundary nodes} of $D$. A \emph{cycle} of $\g$ is a set $C\subseteq E$ such that each vertex of $\g$ is an endpoint of exactly 0 or 2 edges in $C$. Let $d=|V|$, $m=|\B|$, and $n=d-m$.

Recall that $M(D)$ is a matroid on $\eo=E\cup \eh$, where $\eh$ is an element not in $E$, and $\eh$ is shorthand for $\{\eh\}$. For basic descriptions of Dirichlet matroids, we appeal to the theory of \emph{biased graphs} developed by Zaslavsky \cite{zaslavsky1989, zaslavsky1991, zaslavsky2003}. Biased graphs encode a large class of matroids, including Dirichlet matroids. We review this connection in detail in Appendix \ref{sec:appa}.

\begin{mydef}
A \emph{crossing} of $D$ is a minimal (with respect to inclusion) set $X\subseteq E$ containing a path between distinct boundary nodes.
\end{mydef}

\begin{prop}[Circuits]
A set $C\subseteq \eo$ is a circuit of $M(D)$ if and only if one of the following holds:
\begin{enumerate}[(A)]
	\item $C = X\cup \eh$ for some crossing $X$
	\item $C\subseteq E$ is a cycle or loop of $\g$ meeting at
		most 1 boundary node
	\item $C\subseteq E$ is a minimal set containing 2 distinct
		crossings and no circuits of type (B).
\end{enumerate}
	
\begin{proof}
This is proven in Appendix \ref{sec:bias}.
\end{proof}
\label{prop:circuits}
\end{prop}

As we saw in the introduction, the bases of $M(D)$ depend on \emph{groves}, a generalization of spanning trees from the literature on electrical networks \cite{kenyon2011}.

\begin{mydef}
A nonempty acyclic set $F\subseteq E$ is a \emph{grove} of $D$ if $F$ meets every vertex interior vertex, and every connected component of $F$ meets the boundary.
\end{mydef}

\begin{mydef}
Let $\sunc$ be the set of all groves $F$ of $D$ that contain no crossing of $D$. Let $\scro$ be the set of all groves $F$ of $D$ that contain exactly 1 crossing of $D$. 
\label{def:suncscro}
\end{mydef}

\begin{prop}[Bases]
A set $B\subseteq \eo$ is a basis of $M(D)$ if and only if one of the following holds:
\begin{enumerate}[(A)]
	\item $B\in \scro$
	\item $B=F\cup \eh$ for some $F\in \sunc$.
\end{enumerate}
	
\begin{proof}
This is proven in Appendix \ref{sec:bias}.
\end{proof}
\label{prop:bases}
\end{prop}

Sometimes we will prefer to deal only with the simple Dirichlet matroids. We say that a network $D$ is \emph{simple} if $\g$ is simple and $\B$ induces an edgeless subgraph.

\begin{prop}[Simplicity]
A network $D$ is simple if and only if $M(D)$ is simple.
	
\begin{proof}
Let $D$ be a network. The loops of $M(D)$ are the loops of $\g$. The parallel classes of $M(D)$ are the sets of parallel edges of $\g$ and the set $S\cup \eh$, where $S$ is the set of edges between boundary nodes. The result follows.
\end{proof}
\label{prop:simp}
\end{prop}

\begin{eg}[Graphic matroids]
If $|\B|=2$, then $M(D)$ is graphic. In particular, we have $M(D)\cong M(\mg)$, where $\mg$ is the graph obtained from $\g$ by adding an edge between the two boundary nodes. This example is generalized in Proposition \ref{prop:grapheq}.
\label{eg:graphic}
\end{eg}

\begin{eg}[Lines]
Recall the star network $D=S_d$ from the introduction. The bases of $M(D)$ are the 2-element subsets of $\eo$. Hence $D$ is isomorphic to the $d$-point line $U_{2,d}$.
	
\label{eg:star}
\end{eg}

\begin{eg}[Discriminantal arrangements]
Given a point $z\in \C^m$ with all $z_i$ distinct, let $\AA_{m,n}(z)$ denote the arrangement of hyperplanes in $\C^n$ of the following forms:
\begin{equation*}
	\begin{cases}
		x_i=x_j & \mbox{for all } 1 \leq i,j \leq n\\
		x_i=z_j & \mbox{for all } 1 \leq i \leq n \mbox{ and }1\leq j\leq m,
	\end{cases}
\end{equation*}
where the $x_i$ are the coordinates of $\C^n$. The arrangements $\AA_{m,n}(z)$ are called \emph{discriminantal arrangements}. Discriminantal arrangements have been studied extensively for their connections to mathematical physics \cite{cohen2000, manin1989, varchenko2011}. We associate to $\AA_{m,n}(z)$ the matroid defined by its cone (see \cite{stanley2007}).
	
Let $D_{m,n}$ denote the network in which $\g$ is obtained from a complete graph $K_{m+n}$ by deleting all edges between the $m$ boundary nodes. The networks $D_{m,n}$ play the role of complete graphs, in the sense that every simple network is obtained from one of the form $D_{m,n}$ by deletion. The matroid $M(D_{m,n})$ is isomorphic to the matroid of $\AA_{m,n}(z)$.
\label{eg:dmn}
\end{eg}

\begin{eg}[Dirichlet arrangements]
Generalizing the previous example, let $D$ be any simple network, and let $u\in \C^{\B}$ with all $u_i$ distinct. The vector $u$ is called the \emph{Dirichlet boundary data}; we think of $u$ as voltages applied to the boundary nodes of $D$. Let $\AA_D(u)$ denote the arrangement of hyperplanes in $\C^{V\setminus \B}$ given by
\begin{equation*}
	\begin{cases}
		x_i=x_j & \mbox{for all } ij \in E \mbox{ not meeting } \B\\
		x_i=u_j & \mbox{for all } ij \in E \mbox{ with } j \in \B.
	\end{cases}
\end{equation*}
The arrangements $\AA_D(u)$ are called \emph{Dirichlet arrangements}. Dirichlet arrangements model a certain electrical problem as a geometric problem on their complements \cite{lutz2019hyp}. They are also called \emph{$\psi$-graphical arrangements} in connection with order polytopes of finite posets, and \emph{graphic discriminantal arrangements} in connection with the topology of graphic arrangements \cite{stanley2015, cohen2021}. The matroids defined by the cones of these arrangements are the simple Dirichlet matroids.
\label{eg:dirarr}
\end{eg}

\subsection{Matrix representations}
\label{sec:matrep}
We characterize the fields over which $M(D)$ is representable and determine which Dirichlet matroids are graphic.

\begin{mydef}
Let $i\in V\setminus \B$. The \emph{block} of $D$ containing $i$ is the set $U\subseteq V$ of all vertices $j$ such that there exists a path $i_1\cdots i_k$ in $\g$ with $i_1 = i$, $i_k = j$, and $i_1,\ldots,i_{k-1}\in V\setminus \B$.
\label{def:block}
\end{mydef}

In other words, the block containing $i\in V\setminus \B$ is the set of all vertices reachable from $i$ by paths that do not breach the boundary.

\begin{eg}
Consider the network $D$ illustrated in Figure \ref{fig:blocks} with boundary nodes marked in white. For each $i\in V\setminus \B$, let $U(i)$ denote the block of $D$ containing $i$. There are exactly 2 blocks of $D$: $U(i_2)=\{i_1,i_2,i_3\}$ and $U(i_4)=U(i_5)=\{i_3,i_4,i_5,i_6\}$.
	
\begin{figure}[ht]
	\centering
	\begin{tikzpicture}[scale=1.5]
	\coordinate (a) at (0,0);
	\coordinate (b) at (1,0);
	\coordinate (c) at (2,0);
	\coordinate (d) at (3,0);
	\coordinate (e) at (4,0);
	\coordinate (f) at (5,0);
	\draw (a) node[below=1mm] {$i_1$} -- 
		(b) node[below=1mm] {$i_2$} -- 
		(c) node[below=1mm] {$i_3$} -- 
		(d) node[below=1mm] {$i_4$} -- 
		(e) node[below=1mm] {$i_5$} -- 
		(f) node[below=1mm] {$i_6$};
	\draw[fill=white] (a) circle (2pt);
	\draw[fill=black] (b) circle (2pt);
	\draw[fill=white] (c) circle (2pt);
	\draw[fill=black] (d) circle (2pt);
	\draw[fill=black] (e) circle (2pt);
	\draw[fill=white] (f) circle (2pt);
\end{tikzpicture}
	\caption{A network with two blocks.}
	\label{fig:blocks}
\end{figure}
\end{eg}

\begin{mydef}
Let $\omega(D)$ be the positive integer given by
\begin{equation*}
	\omega(D)=\max|U\cap \B|,
\end{equation*}
where the maximum runs over all blocks $U$ of $D$.
\end{mydef}

\begin{prop}
The matroid $M(D)$ is representable over $\KK$ if and only if $|\KK|\geq \omega(D)$.
\label{prop:representable}
	
\begin{proof}
See Appendix \ref{sec:reppf}.
\end{proof}
\end{prop}

\begin{prop}
The following are equivalent:
\begin{enumerate}[(a)]
	\item $\omega(D)=2$
	\item $M(D)$ is binary
	\item $M(D)$ is regular
	\item $M(D)$ is graphic.
\end{enumerate}

\begin{proof}
Equivalence of (a), (b) and (c) follows from Proposition \ref{prop:representable}. We prove equivalence of (a) and (d). Graphic matroids are regular. Hence if $\omega(D)\geq 3$, then Proposition \ref{prop:representable} implies that $M(D)$ is not graphic.
	
Suppose instead that $\omega(D)=2$. We construct a graph $H$ such that $M(D)\cong M(H)$. Start with $H=K_2$ on the vertex set $\{i,j\}$. Given $S\subseteq V$, let $\g(S)$ denote the subgraph of $\g$ induced by $S$. For every block $U$ of $D$ containing only 1 boundary node, take a copy of $\g(U)$ and attach it to $H$ by gluing the boundary node to either $i$ or $j$. For every block $U$ containing 2 boundary nodes, take a copy of $\g(U)$ and attach it to $H$ by gluing one boundary node to $i$ and the other to $j$. Finally, for every edge of $\g$ between boundary nodes, add an edge to $H$ parallel to $ij$. The resulting graph $H$ satisfies $M(D)\cong M(H)$. An explicit isomorphism is given by swapping $\eh$ and $ij$.
	\end{proof}
\label{prop:grapheq}
\end{prop}
\section{Matroid quotients and dual networks}
\label{sec:dual}

Let $M$ and $N$ be matroids on $E$. If every flat of $N$ is a flat of $M$, then we say that $N$ is a \emph{quotient} of $M$, or alternatively that there is a \emph{matroid quotient} $M\to N$. This terminology is explained by the following observation. Given a vector space $V$ and a function $\varphi: E\to V$, consider the matroid $M(\varphi)$ on $E$ in which a set is independent if and only if the images of its elements under $\varphi$ are linearly independent. If $U$ is a linear subspace of $V$ and $f:V\to V/U$ is the quotient map, then $M(f\circ\varphi)$ is a quotient of $M(\varphi)$ \cite[Proposition 7.4.8.2]{brylawski1986}.

\begin{prop}[{\cite[Proposition 8.1.6]{kung1986}}]
Let $M$ and $N$ be matroids on $E$. The following are equivalent:
\begin{enumerate}[(i)]
	\item $N$ is a quotient of $M$
	\item Every circuit of $M$ is a union of circuits of $N$
	\item For any sets $A\subseteq B\subseteq E$ we have
		\begin{equation*}
			\rk_M(B)-\rk_M(A)\geq \rk_N(B)-\rk_N(A).
		\end{equation*}
\end{enumerate}
\label{defthm:quot}
\end{prop}

There are many other cryptomorphic definitions of matroid quotients, but we will not need them here. We turn our attention to embeddings of networks in surfaces with boundary.

\begin{mydef}[Network embedding]
Let $D = (\g,\B)$ be a network and $\Sigma$ a surface with boundary  $\partial\Sigma$. An \emph{embedding} of $D$ in $\Sigma$ is an embedding of $\g$ in $\Sigma$ that satisfies the following conditions:
\begin{enumerate}
	\item $\B\subseteq \partial\Sigma$
	\item The closure of each face of $\g$ is homeomorphic to a closed disk
	\item No face of $\g$ meets more than one component of $\partial\Sigma$.
\end{enumerate}
\label{def:embedding}
\end{mydef}

The conditions of Definition \ref{def:embedding} allow us to define a geometric dual of $D$ as follows. An example is illustrated in Figure \ref{fig:dual}.
 
\begin{mydef}[Geometric dual]
Suppose that $D$ is embedded in $\Sigma$. We construct another network $D^*$ also embedded in $\Sigma$. Draw one vertex of $D^*$ in each face of $\g$; if a face meets $\partial \Sigma$, draw that vertex in $\partial \Sigma$. The boundary nodes of $D^*$ are the vertices in $\partial \Sigma$. Two vertices of $D^*$ share an edge if the corresponding faces of $\g$ share an edge of $\g$. This defines a unique network $D^*$, called the \emph{dual} of $D$.
\label{def:dual}
\end{mydef}

\begin{figure}[ht]
	\centering
	\begin{tikzpicture}[scale=1.8]
\draw[opacity=0.4] (0,0) circle (1);
\foreach \s in {1,2,3,4,5}
{
	\coordinate (a\s) at ({-162+72*\s}:1);
};

\coordinate (g) at (0.3,-0.65);
\coordinate (c) at (0.1,0.1);
\coordinate (f) at (0.6,-0.1);
\coordinate (e) at (-0.3,-0.5);
\coordinate (b) at (-0.6,0.4);
\coordinate (a) at (0.25,0.6);
\coordinate (d) at (0.67,0.3);
\draw (a) -- (a3) -- (d) -- (f) -- (a2) -- (g) -- (a1) -- (e) -- (a5) -- (b) -- (a4) -- (a);
\draw (b) -- (c) -- (a) -- (d);
\draw (a5) -- (c);
\draw (e) -- (c) -- (f);
\draw (e) -- (g);

\foreach \s in {a,b,c,d,e,f,g}
{
	\draw[fill=black] (\s) circle (1.125pt);	
};

\foreach \s in {1,2,3,4,5}
{
	\draw[fill=black] (a\s) circle (1.125pt);
};

\foreach \s in {1,2,3,4,5}
{
	\coordinate (b\s) at ({90+72*\s}:1);
};

\coordinate (h) at ($0.5*(a)+0.5*(b) + (-0.3,0.1)$);
\coordinate (i) at ($0.33*(a3) + 0.33*(a) + 0.33*(d)$);
\coordinate (j) at ($0.33*(a5) + 0.33*(b) + 0.33*(c)$);
\coordinate (k) at ($0.33*(a5) + 0.33*(c) + 0.33*(e)$);
\coordinate (l) at ($0.33*(c) + 0.33*(d) + 0.33*(f)$);
\coordinate (m) at ($0.33*(f) + 0.33*(g) + 0.33*(a2) - (0.1,0.05)$);
\coordinate (n) at ($0.33*(a1) + 0.33*(e) + 0.33*(g)$);
\draw[dashed] (b2) -- (k) -- (j) -- (b1) -- (h) -- (b5) -- (i) -- (b4) -- (m) -- (b3) -- (n) -- (b2);
\draw[dashed] (j) -- (h) -- (l) -- (b4);
\draw[dashed] (i) -- (l);
\draw[dashed] (k) -- (m) -- (l);
\draw[dashed] (m) -- (n);
\foreach \s in {h,i,j,k,l,m,n}
{
	\draw[fill=white] (\s) circle (1.125pt);	
};
\foreach \s in {1,2,3,4,5}
{
	\draw[fill=white] (b\s) circle (1.125pt);
};
\end{tikzpicture}
	\caption{A circular network and its geometric dual, with one vertex set marked in white and the other in black.}
	\label{fig:dual}
\end{figure}

\begin{mydef}
A \emph{bond} of $D$ is a minimal set $B\subseteq E$ such that $E\setminus B$ contains no crossings.
\label{def:insul}
\end{mydef}

\begin{prop}[Cocircuits]
Let $\og$ be the graph obtained from $\g$ by contracting $\B$ to a single vertex. A set $X\subseteq \oe$ is a cocircuit of $M(D)$ if and only if one of the following holds:
\begin{enumerate}[(i)]
	\item $X$ is a cocircuit of $M(\og)$
	\item $X=B\cup \eh$ for some bond $B$ of $D$.
\end{enumerate}
	
\begin{proof}
See Appendix \ref{sec:bias}.
\end{proof}
\label{prop:cocircuits}
\end{prop}

\subsection{Proof of Theorem \ref{thm:dualintro}}

Consider the quotient space $\overline{\Sigma}$ obtained from $\Sigma$ by identifying all points in $\partial\Sigma$ as a single point. We adapt the proof of \cite[Theorem 4.3]{em2015} to argue the following lemma.

\begin{lem}
Let $\Sigma$ be a surface with boundary. If $\g$ is embedded in $\overline{\Sigma}$ with a vertex at the singular point, then $M(\g^*)$ is a quotient of $M^*(\g)$.

\begin{proof}
By Proposition \ref{defthm:quot} it suffices to show that if $A\subseteq B\subseteq E$, then
\begin{equation}
	\rk_{M^*(\g)}(B) - \rk_{M^*(\g)}(A) 
		\geq \rk_{M(\g^*)}(B)-\rk_{M(\g^*)}(A).
	\label{eq:rk0}
\end{equation}
Let $c_G(A)$ denote the number of components of the spanning subgraph $(V,A)$ of $G$. In this notation we have $\rk_{M(G)}(A)=|V|-c_G(A)$. Hence $\rk_{M^*(G)}(A)=|A|-c_G(E\setminus A) + 1$, so for any $e\in E$ we have
\begin{equation}
	\rk_{M^*(G)}(A\cup \{e\}) - \rk_{M^*(G)}(A) 
		= 1-c_G(E\setminus (A\cup e)) + c_G(E\setminus A).
	\label{eq:rk1}
\end{equation}
We also have
\begin{equation*}
	\rk_{M(G^*)}(A)=|V(G^*)|-c_{G^*}(A).
\end{equation*}
Consider $\g$ as a 1-complex embedded in $\overline{\Sigma}$. Let $\mathcal{A}$ (resp., $\mathcal{E}$) denote the union of all 1-cells in $A$ (resp., $E$), as a subset of $\overline{\Sigma}$. If $X$ is a subspace of $\overline{\Sigma}$, then write $k(X)$ for the number of components of $\overline{\Sigma}\setminus (X\cup v)$, where $v$ is the singular point. We have $c_{G^*}(A)=\rho(\mathcal{E}\setminus \mathcal{A})$. Thus
\begin{equation}
	\rk_{M(G^*)}(A\cup \{e\}) - \rk_{M(G^*)}(A)
		=\rho(\mathcal{E}\setminus \mathcal{A})
		-\rho(\mathcal{E}\setminus (\mathcal{A}\cup e)).
	\label{eq:rk2}
\end{equation}
We claim that \eqref{eq:rk1} is greater than or equal to \eqref{eq:rk2}. Writing $S=E\setminus A$ and letting $\mathcal{S}$ denote the union of elements of $S$, the claim is equivalent to
\begin{equation}
	1+c_G(S)-c_G(S\setminus \{e\}) 
		\geq \rho(\mathcal{S})-\rho(\mathcal{S}\setminus e).
\label{eq:rk3}
\end{equation}
The left side must equal 0 or 1. If it equals 0, then $e$ meets at least one vertex not met by $S$. Thus removing $e$ from $(\overline{\Sigma}\setminus (\mathcal{S}\cup v)) \cup e$ does not increase the number of components, so the right side of \eqref{eq:rk3} is 0, as desired. If instead the left side is 1, then $e$ meets two vertices already met by $S$. Thus removing $e$ from $(\overline{\Sigma}\setminus (\mathcal{S}\cup v)) \cup e$ increases the number of components by at most 1, so the right side of \eqref{eq:rk3} is at most 1, as desired. The claim follows.

Now let $A=S_1\subseteq \cdots \subseteq S_d =B$ such that each $|S_{i+1}\setminus S_i|=1$ for all $i$. The claim gives
\begin{equation*}
	\rk_{M^*(\g)}(S_{i+1}) - \rk_{M^*(\g)}(S_i) 
		\geq \rk_{M(\g^*)}(S_{i+1})-\rk_{M(\g^*)}(S_i).
\end{equation*}
Taking the sum over all $i$ gives \eqref{eq:rk0}.
\end{proof}
\label{lem:quot}
\end{lem}

We will freely refer to the types of circuits and cocircuits of $M(D)$ described in Propositions \ref{prop:circuits} and \ref{prop:cocircuits}, respectively.

\begin{proof}[Proof of Theorem \ref{thm:dualintro}]
Suppose that $D$ is embedded in $\Sigma$. Let $\og$ be the graph obtained from $\g$ by identifying all vertices in $\B$ as a single vertex $v_0$. The embedding of $D$ in $\Sigma$ gives an embedding of $\og$ in $\overline{\Sigma}$ with $v_0$ at the singular point of $\overline{\Sigma}$. Lemma \ref{lem:quot} gives a matroid quotient $M^*(\og)\to M(\og^*)$. It follows that any circuit of $M^*(\og)$, i.e. any cocircuit of $M(D)$ of type (i), is a union of circuits of $M(\og^*)$.

Let $C$ be a circuit of $M(\og^*)$. We claim that $C$ is a union of circuits of $M(D^*)$. Let $H$ be the graph underlying $D^*$. There is an obvious isomorphism $H\to \og^*$ taking the boundary nodes of $D^*$ to the vertices of $\og^*$ in faces of $\og$ whose closures contain $v_0$. Let $k$ be the number of such vertices of $\og^*$ met by $C$. If $k\leq 1$, then $C$ is a circuit of $M(D^*)$ of type (A). If $k\geq 2$, then $C$ is a union of $\lceil k/2\rceil$ circuits of $M(D^*)$ of type (C). The claim follows. Hence every cocircuit of $M(D)$ of type (i) is a union of circuits of $M(D^*)$.

Now suppose that $B\cup \eh$ is a cocircuit of $M(D)$ of type (ii). Note that $B$ is a minimal edge set containing paths between every boundary node of $D^*$. Thus $B$ is a union of crossings of $D^*$. Each of these crossings corresponds to a circuit of $M(D^*)$ of type (A), the union of which is $B\cup \eh$.
\end{proof}

\subsection{Bounds for circular networks}

We give tight upper and lower bounds for the number of circuits of $M(D^*)$ needed to form a cocircuit of $M(D^*)$ when $D$ is circular. Asymptotically, this number is $\Theta(m)$.

\begin{thm}
If $D$ is circular, then every cocircuit of $M(D)$ can be written as a union of $k$ circuits, where $\frac{1}{4}m + \frac{1}{2} \leq k <\frac{1}{2}m+1$, and these bounds are tight.
	
\begin{proof}
We prove only the upper bound; similar techniques yield the lower bound. Examples \ref{eg:sun} and \ref{eg:bisun} will prove that the bounds are tight.

Suppose that $D$ is circular, so that $\Sigma$ is a closed disk and $\overline{\Sigma}$ is a 2-sphere. Let $\og$ be the graph defined in the proof of Theorem \ref{thm:dualintro}. Lemma \ref{lem:quot} gives a matroid quotient $M^*(\og)\to M(\og^*)$. Since the ranks of the two matroids are equal, this matroid quotient is an isomorphism. Thus if $C$ is a cocircuit of $M(D)$ of type (i), i.e. a circuit of $M^*(\og)$, then $C$ is a circuit of $M(\og^*)$. In the proof of Theorem \ref{thm:dualintro}, it was shown that $C$ is a union of $\lceil k/2\rceil$ circuits of $M(D^*)$, where $k$ is the cardinality of a subset of $\B$. Hence $C$ is a union of at most $\lceil m/2\rceil$  circuits of $M(D^*)$, as desired.
		
Now suppose that $C=B\cup \eh$ is a cocircuit of $M(D)$ of type (ii). To prove the bound, can choose crossings $X_1,\ldots,X_{m-1}$ contained in $C$ such that for each $i=2,\ldots,m-1$ there is exactly one boundary node met by both $X_i$ and $\bigcup_{j < i} X_j$. Let $X= \bigcup_{i=1}^{m-1} X_i$. Note that $X$ contains a path between each pair of boundary nodes. Hence $X=B$ by minimality. If $i\neq j$, then $X_i\cup X_j$ is a circuit of $M(D^*)$ of type (C). If $m$ is odd, then
\begin{equation*}
	B=(X_1\cup X_2) \cup \cdots \cup (X_{m-2}\cup X_{m-1})
\end{equation*}
is a union of $(m-1)/2$ circuits of $M(D^*)$. Since $X_1\cup \eh$ is a circuit of $M(D^*)$ of type (A), we can write $C=B\cup (X_1\cup \eh)$ as a union of $(m+1)/2$ circuits of $M(D^*)$. If $m$ is even, then $B\setminus X_1$ is a union of $(m-2)/2$ circuits of $M(D^*)$. Thus $C=(B\setminus X_1) \cup (X_1\cup \eh)$ is a union of $m/2$ circuits of $M(D^*)$. In either case, the number of circuits needed is less than $(m+2)/2$, as desired.
\end{proof}
\label{thm:duallater}
\end{thm}

\begin{eg}
The networks in Figure \ref{fig:sun} are the \emph{sun networks} on 4, 5 and 6 boundary nodes. We obtain such a network on any number of boundary nodes.
	
\begin{figure}[ht]
	\centering
	\begin{tikzpicture}[scale=1.5]
\def\a{2.65}
\draw[opacity=0.4] (\a,0) circle (1);
\foreach \s in {1,2,3,4,5,6}
{
	\coordinate (a\s) at ($({90+60*\s}:0.6)+(\a,0)$);
	\draw[fill=black] (a\s) circle (1.2pt);
	\coordinate (b\s) at ($({60*\s}:1)+(\a,0)$);
	\draw[fill=black] (b\s) circle (1.2pt);
};
\draw (a1)--(a2)--(a3)--(a4)--(a5)--(a6)--(a1);
\draw (a5) -- (b1)--(a6)--(b2)--(a1)--(b3)--(a2)-- (b4)--(a3) -- (b5)--(a4)--(b6)--(a5);

\draw[opacity=0.4] (-\a,0) circle (1);
\foreach \s in {1,2,3,4}
{
	\coordinate (c\s) at ($({90+360*\s/4}:0.6)-(\a,0)$);
	\draw[fill=black] (c\s) circle (1.2pt);
	\coordinate (d\s) at ($({360*(\s+0.5)/4}:1)-(\a,0)$);
	\draw[fill=black] (d\s) circle (1.2pt);
};
\draw (c1) -- (c2) -- (c3) -- (c4) -- (c1);
\draw (c3) -- (d4) -- (c4) -- (d1) -- (c1) -- (d2) -- (c2) -- (d3) -- (c3);

\draw[opacity=0.4] (0,0) circle (1);
\foreach \s in {1,2,3,4,5}
{
	\coordinate (e\s) at ($({54+360*\s/5}:0.6)$);
	\draw[fill=black] (e\s) circle (1.2pt);
	\coordinate (f\s) at ($({-18+72*(\s+0.5)}:1)$);
	\draw[fill=black] (f\s) circle (1.2pt);
};
\draw (e1) -- (e2) -- (e3) -- (e4) -- (e5) -- (e1);
\draw (e3) -- (f4) -- (e4) -- (f5) -- (e5) -- (f1) -- (e1) -- (f2) -- (e2) -- (f3) -- (e3);
\end{tikzpicture}
	\caption{Three sun networks.}
	\label{fig:sun}
\end{figure}
	
The left side of Figure \ref{fig:sundual} illustrates the sun network $D$ with $m=5$ and its dual $D^*$. On the right side, a bond $B$ of $D$ is highlighted in green. The minimum number of circuits of $M(D^*)$ whose union is $B\cup\eh$ is 3. In a similar fashion we can construct a bond of the sun network on any number $m$ of boundary nodes. The corresponding minimum number of circuits is $\lceil m/2\rceil$.
	
\begin{figure}[ht]
	\centering
	\begin{tikzpicture}[scale=1.5]
\draw[opacity=0.4] (0,0) circle (1);
\foreach \s in {1,2,3,4,5}
{
	\coordinate (a\s) at ($({54+360*\s/5}:0.6)$);
	\coordinate (b\s) at ($({-18+72*(\s+0.5)}:1)$);
	\coordinate (c\s) at ({-18+72*\s}:1);
	\coordinate (d\s) at ({18+72*\s}:0.67);
};
\draw (a1) -- (a2) -- (a3) -- (a4) -- (a5) -- (a1);
\draw (a3) -- (b4) -- (a4) -- (b5) -- (a5) -- (b1) -- (a1) -- (b2) -- (a2) -- (b3) -- (a3);

\coordinate (e) at (0,0);
\draw[loosely dashed] (e)--(d1);
\draw[loosely dashed] (e)--(d2);
\draw[loosely dashed] (e)--(d3);
\draw[loosely dashed] (e)--(d4);
\draw[loosely dashed] (e)--(d5);
\draw[loosely dashed] (c1)--(d1)--(c2)--(d2)--(c3) --(d3)--(c4)--(d4)--(c5)--(d5)--(c1);

\foreach \s in {1,2,3,4,5}
{
	\draw[fill=black] (a\s) circle (1.2pt);
	\draw[fill=black] (b\s) circle (1.2pt);
	\draw[fill=white] (c\s) circle (1.2pt);
	\draw[fill=white] (d\s) circle (1.2pt);
};
\draw[fill=white] (e) circle (1.2pt);

\def\a{2.75}
\draw[opacity=0.4] (\a,0) circle (1);
\foreach \s in {1,2,3,4,5}
{
	\coordinate (f\s) at ($(\a,0)+({54+360*\s/5}:0.6)$);
	\coordinate (g\s) at ($(\a,0)+({-18+72*(\s+0.5)}:1)$);
	\draw[fill=black] (g\s) circle (1.2pt);
	\coordinate (h\s) at ($(\a,0)+({-18+72*\s}:1)$);
	\coordinate (i\s) at ($(\a,0)+({18+72*\s}:0.67)$);
};
\draw (f1) -- (f2) -- (f3) -- (f4) -- (f5) -- (f1);
\draw (f3) -- (g4) -- (f4) -- (g5) -- (f5) -- (g1) -- (f1) -- (g2) -- (f2) -- (g3) -- (f3);

\coordinate (j) at (\a,0);
\draw[loosely dashed] (j)--(i1);
\draw[loosely dashed] (j)--(i2);
\draw[loosely dashed] (j)--(i3);
\draw[loosely dashed] (j)--(i4);
\draw[loosely dashed] (j)--(i5);
\draw[loosely dashed] (h1)--(i1)--(h2);
\draw[line width=0.5mm,green] (h2)--(i2)--(h3) --(i3)--(h4)--(i4)--(h5)--(i5)--(h1);

\foreach \s in {1,2,3,4,5}
{
	\draw[fill=black] (f\s) circle (1.2pt);
	\draw[fill=black] (g\s) circle (1.2pt);
	\draw[line width=0.5mm,green,fill=white] (h\s) circle (1.2pt);
};
\foreach \s in {2,3,4,5}
{
	\draw[line width=0.5mm,green,fill=white] (i\s) circle (1.2pt);
};
\draw[fill=white] (i1) circle (1.2pt);
\draw[fill=white] (j) circle (1.2pt);
\end{tikzpicture}
	\caption{A sun network $D$ and its geometric dual, left;
		a bond of $D$ in green, right.}
	\label{fig:sundual}
\end{figure}

\label{eg:sun}
\end{eg}

\begin{eg}
The networks in Figure \ref{fig:bisun} are the \emph{bisected sun networks} on 4, 6 and 10 boundary nodes. We obtain such a network on any even number of boundary nodes.
	
\begin{figure}[ht]
	\centering
	\begin{tikzpicture}[scale=1.5]
\draw[opacity=0.4] (0,0) circle (1);
\foreach \s in {1,2,3,4,5,6}
{
	\coordinate (a\s) at ({90+60*\s}:0.6);
	\draw[fill=black] (a\s) circle (1.2pt);
	\coordinate (b\s) at ({60*\s}:1);
	\draw[fill=black] (b\s) circle (1.2pt);
};
\draw (a1)--(a2)--(a3)--(a4)--(a5)--(a6)--(a1);
\draw (a3)--(a6);
\draw (a5) -- (b1)--(a6)--(b2)--(a1)--(b3)--(a2)-- (b4)--(a3) -- (b5)--(a4)--(b6)--(a5);

\def\a{2.65}
\draw[opacity=0.4] (\a,0) circle (1);
\foreach \s in {1,2,3,4,5,6,7,8}
{
	\coordinate (c\s) at ($({90+360*\s/8}:0.6)+(\a,0)$);
	\draw[fill=black] (c\s) circle (1.2pt);
	\coordinate (d\s) at ($({360*(\s+0.5)/8}:1)+(\a,0)$);
	\draw[fill=black] (d\s) circle (1.2pt);
};
\draw (c1)--(c2)--(c3)--(c4)--(c5) -- (c6)--(c7)--(c8)--(c1);
\draw (c4)--(c8);
\draw (c7)--(d1)--(c8)--(d2)--(c1)--(d3)--(c2)-- (d4)--(c3)--(d5)--(c4)--(d6)--(c5)--(d7)--(c6) -- (d8) -- (c7);

\draw[opacity=0.4] (-\a,0) circle (1);
\foreach \s in {1,2,3,4}
{
	\coordinate (e\s) at ($({90+360*\s/4}:0.6)-(\a,0)$);
	\draw[fill=black] (e\s) circle (1.2pt);
	\coordinate (f\s) at ($({360*(\s+0.5)/4}:1)-(\a,0)$);
	\draw[fill=black] (f\s) circle (1.2pt);
};
\draw (e1) -- (e2) -- (e3) -- (e4) -- (e1);
\draw (e2) -- (e4);
\draw (e3) -- (f4) -- (e4) -- (f1) -- (e1) -- (f2) -- (e2) -- (f3) -- (e3);
\end{tikzpicture}
	\caption{Three bisected sun networks.}
	\label{fig:bisun}
\end{figure}

The left side of Figure \ref{fig:bisundual} illustrates the bisected sun network $D$ with $m=6$ and its dual $D^*$. On the right side, a bond $B$ of $D$ is highlighted in green. The minimum number of circuits of $M(D^*)$ whose union is $B\cup\eh$ is 2. Similarly we can construct a bond of the bisected sun network on any number $m\equiv 2\pmod{4}$ of boundary nodes. The corresponding minimum number of circuits is $\frac{1}{4}m+\frac{1}{2}$, achieving the lower bound in Theorem \ref{thm:duallater}.
	
\begin{figure}[ht]
	\centering
	\begin{tikzpicture}[scale=1.5]
\draw[opacity=0.4] (0,0) circle (1);
\foreach \s in {1,2,3,4,5,6}
{
	\coordinate (a\s) at ({90+60*\s}:0.6);
	\draw[fill=black] (a\s) circle (1.2pt);
	\coordinate (b\s) at ({60*\s}:1);
	\draw[fill=black] (b\s) circle (1.2pt);
	\coordinate (c\s) at ({30+60*\s}:1);
	\coordinate (d\s) at ({60*\s}:0.67);
	\draw[fill=white] (c\s) circle (1.2pt);
	\draw[fill=white] (d\s) circle (1.2pt);
};
\coordinate (e1) at (-0.2,0);
\coordinate (e2) at (0.2,0);
\draw (a1)--(a2)--(a3)--(a4)--(a5)--(a6)--(a1);
\draw (a3)--(a6);
\draw (a5) -- (b1)--(a6)--(b2)--(a1)--(b3)--(a2)-- (b4)--(a3) -- (b5)--(a4)--(b6)--(a5);
\draw[loosely dashed] (e1)--(e2);
\draw[loosely dashed] (e1)--(d2);
\draw[loosely dashed] (e1)--(d3);
\draw[loosely dashed] (e1)--(d4);
\draw[loosely dashed] (e2)--(d5);
\draw[loosely dashed] (e2)--(d6);
\draw[loosely dashed] (e2)--(d1);
\draw[loosely dashed] (d1)--(c1)--(d2)--(c2)--(d3) -- (c3) -- (d4) -- (c4)--(d5)--(c5)--(d6)--(c6) --(d1);

\foreach \s in {1,2,3,4,5,6}
{
\draw[fill=white] (c\s) circle (1.2pt);
\draw[fill=white] (d\s) circle (1.2pt);
};
\draw[fill=white] (e1) circle (1.2pt);
\draw[fill=white] (e2) circle (1.2pt);

\def\a{2.75}
\draw[opacity=0.4] (\a,0) circle (1);
\foreach \s in {1,2,3,4,5,6}
{
	\coordinate (f\s) at ($(\a,0)+({90+60*\s}:0.6)$);
	\draw[fill=black] (f\s) circle (1.2pt);
	\coordinate (g\s) at ($(\a,0)+({60*\s}:1)$);
	\draw[fill=black] (g\s) circle (1.2pt);
	\coordinate (h\s) at ($(\a,0)+({30+60*\s}:1)$);
	\coordinate (i\s) at ($(\a,0)+({60*\s}:0.67)$);
};
\coordinate (j1) at (\a-0.2,0);
\coordinate (j2) at (\a+0.2,0);
\draw (f1)--(f2)--(f3)--(f4)--(f5)--(f6)--(f1);
\draw (f3)--(f6);
\draw (f5) -- (g1)--(f6)--(g2)--(f1)--(g3)--(f2)-- (g4)--(f3) -- (g5)--(f4)--(g6)--(f5);
\draw[loosely dashed] (i1)--(h1)--(i2)--(h2)--(i3) -- (h3) -- (i4) -- (h4)--(i5)--(h5)--(i6)--(h6) --(i1);

\draw[line width=0.5mm,green] (j1)--(j2);
\draw[line width=0.5mm,green] (j1)--(i2);
\draw[line width=0.5mm,green] (j1)--(i3);
\draw[line width=0.5mm,green] (j1)--(i4);
\draw[line width=0.5mm,green] (j2)--(i5);
\draw[line width=0.5mm,green] (j2)--(i6);
\draw[line width=0.5mm,green] (j2)--(i1);
\draw[line width=0.5mm,green] (i1) --(h1);
\draw[line width=0.5mm,green] (i2) --(h2);
\draw[line width=0.5mm,green] (i3) --(h3);
\draw[line width=0.5mm,green] (i4) --(h4);
\draw[line width=0.5mm,green] (i5) --(h5);
\draw[line width=0.5mm,green] (i6) --(h6);

\foreach \s in {1,2,3,4,5,6}
{
	\draw[line width=0.5mm,green,fill=white] (h\s) circle (1.2pt);
	\draw[line width=0.5mm,green,fill=white] (i\s) circle (1.2pt);
};
\draw[line width=0.5mm,green,fill=white] (j1) circle (1.2pt);
\draw[line width=0.5mm,green,fill=white] (j2) circle (1.2pt);
\end{tikzpicture}
	\caption{A bisected sun network $D$ and its geometric dual, left;
		a bond of $D$ in green, right.}
	\label{fig:bisundual}
\end{figure}

\label{eg:bisun}
\end{eg}
\section{Bergman fans}
\label{sec:berg}

Let $M$ be a matroid on $E$. The \emph{tropical linear space} of $M$ is a geometric object that can be used to prove purely combinatorial results about $M$. For references, see \cite{maclagan2015}.

\begin{mydef}
The \emph{tropical linear space} of $M$ is the set $\trop{M}$ consisting of all $x\in \R^E$ such that the minimum of $\{x_e : e\in C\}$ is achieved at least twice for each circuit $C$ of $M$.
\label{def:trop}
\end{mydef}

A \emph{polyhedral fan} is a polyhedral complex whose elements are cones. The set $\trop{M}$ is the support of a polyhedral fan called the \emph{Bergman fan} of $M$.

\begin{mydef}
The \emph{Bergman fan} $\berg{M}$ of $M$ is the coarsest polyhedral fan supported on $\trop{M}$.
\label{def:berg}
\end{mydef}

For each $x\in \R^E$, let $M_x$ be the set of \emph{$x$-maximal bases} of $M$, i.e. the bases that maximize the linear form $\sum_{e\in E} x_e$.

\begin{prop}
Two points $x,y\in \trop{M}$ belong to the same cone of $\berg{M}$ if and only if $M_x=M_y$.
\label{prop:xmax}
\end{prop}

\subsection{Proof of Theorem \ref{thm:bergintro}}

Assume that $G$ is loopless. We write $\trop{D} = \trop{M(D)}$, $\berg{D}=\berg{M(D)}$ and similarly for graphs. Let $C\subseteq V$ be a clique of $G$. We write $K_C$ for the subgraph induced by $C$, and $E(C)$ for its edge set. Given $x\in \R^E$ and $S\subseteq E$, let $x_S$ denote the restriction of $x$ to $S$.

\begin{lem}
Let $C$ be a clique of $G$. For any $x\in \trop{\g}$ and any $x_{E(C)}$-maximal spanning tree $T$ of $K_C$, there is an $x$-maximal spanning tree of $\g$ containing $T$.
	
\begin{proof}
Suppose without loss of generality that $\g$ is simple. Let $w\in \trop{\g}$. We construct the desired tree with a greedy algorithm. Suppose that some number, possibly zero, of the edges of $G$ are colored red. Let $Z$ be a cycle of $\g$. If $Z$ contains a red edge, then do nothing. If $Z$ contains no red edges, then color an $x$-minimal edge of $Z$ red. This procedure is called the \emph{red rule}. Starting with all edges of $\g$ uncolored and applying the red rule to all cycles of $\g$ in any order yields an $x$-maximal spanning tree of $\g$ consisting of the uncolored edges \cite[Theorem 6.1]{tarjan1983}.
		
Suppose that no edge in $Z$ is red, and that $Z$ contains exactly one edge in $E(C)$. Since $x\in \trop{\g}$, there must be an $x$-minimal edge of $Z$ in $E\setminus E(C)$. When applying the red rule to such a cycle, we require that an edge in $E\setminus E(C)$ must be colored red. We call this the \emph{modified red rule}.
		
Start with $\g$ uncolored. First apply the red rule to all cycles of $\g$ contained in $E\setminus E(C)$. Next, apply the modified red rule to all cycles of $\g$ containing exactly one edge in $E(C)$. Every cycle contained in $E\setminus E(C)$ now contains a red edge. Moreover there are no red edges in $E(C)$. Let $S$ be the set of uncolored edges in $E\setminus E(C)$. If $T$ is any $x$-maximal spanning tree of $K_C$, then it follows that $S\cup T$ is an $x$-maximal spanning tree of $\g$.
\end{proof}
\label{new:lem:treecomp}
\end{lem}

Let $U_C\subseteq \R^E$ denote the set of points that are constant on $E(C)$. Let
\begin{equation*}
	\berg{G}_C = \{P\cap U_C : P\in \berg{G} \}.
\end{equation*}
In other words, $\berg{G}_C$ is the restriction of $\berg{G}$ to $U_C$.

\begin{lem}
Let $C$ be a clique of $G$. Every cone of $\berg{G}$ is either contained in $U_C$ or disjoint from $U_C$. In other words, $\berg{G}_C$ is a subfan of $\berg{G}$.

\begin{proof}
Let $x\in \trop{G}$. We have $x_{E(C)}\in\trop{K_C}$, since every cycle of $K_C$ is a cycle of $G$. Propositions \ref{prop:xmax} and \ref{prop:ardila} give correspondences between the cones of $\berg{K_C}$, combinatorial types of phylogenetic trees, and sets of spanning trees of $K_C$. Combining these with \eqref{eq:fform} yields the following equivalent statements, given a point $x\in \trop{G}$:
\begin{enumerate}[(i)]
	\item $x\in U_C$
	\item Every spanning tree of $K_C$ is $x_{E(C)}$-maximal
	\item The phylogenetic tree determined by $x_{E(C)}$ has no internal edges.	\end{enumerate}
	
Suppose that $x\in \trop{G}\cap U_C$ and $y\in \trop{G}\setminus U_C$. Statements (i) and (ii) above give a spanning tree of $K_C$ that is not $y_{E(C)}$-maximal. There is no $y$-maximal spanning tree of $G$ containing $T$. However, $T$ is $x_{E(C)}$-maximal, so Lemma \ref{new:lem:treecomp} gives an $x$-maximal spanning tree of $G$ containing $T$. It follows that $M(G)_x\neq M(G)_y$ in the notation of Proposition \ref{prop:xmax}, so $x$ and $y$ belong to different cones of $\berg{G}$. The result follows.
\end{proof}
\label{lem:subfan}
\end{lem}

Recall that $M(D)$ is simple if and only if $D$ is simple, i.e. if $G$ is simple and $\B$ induces an edgeless subgraph (Proposition \ref{prop:simp}). We reduce the proof of Theorem \ref{thm:bergintro} to the case where $D$ is simple. 

\begin{lem}
It suffices to to prove Theorem \ref{thm:bergintro} when $D$ is simple.
\begin{proof}
Let $P$ be a parallel class of a loopless matroid $M$. Each pair of distinct elements of $P$ forms a circuit. Any element of $\trop{M}$ is constant on such a pair, hence constant on $P$. Thus if $M'$ is the simplification of $M$, then omitting all but one coordinate from each parallel class gives an isomorphism
\begin{equation}
	\berg{M}\to\berg{M'}.
	\label{eq:bergiso}
\end{equation}

Let $D$ be a network and $D'$ its simplification, obtained by deleting all duplicate edges and edges between boundary nodes. Let $H$ be the graph obtained from $D'$ by adding an edge between each pair of boundary nodes. Note that $H$ is also the simplification of $\mg$. From \eqref{eq:bergiso} we obtain isomorphisms
\begin{equation}
	\berg{D}\to \berg{D'}
	\label{eq:iso1}
\end{equation}
and $\berg{\mg}\to \berg{H}$. Applying Lemma \ref{lem:subfan}, the latter restricts to an isomorphism
\begin{equation}
	\berg{\mg}_\B \to \berg{H}_\B.
	\label{eq:iso2}
\end{equation}
Theorem \ref{thm:bergintro} for simple networks gives an isomorphism
\begin{equation}
	\berg{D'}\to\berg{H}_{\B}.
	\label{eq:iso3}
\end{equation}
Combining \eqref{eq:iso1}, \eqref{eq:iso2} and \eqref{eq:iso3} gives the desired isomorphism $\trop{D}\to\trop{\mg}_{\B}$.
\end{proof}
\label{lem:bergsimp}
\end{lem}

\begin{proof}[Proof of Theorem \ref{thm:bergintro}]
Lemma \ref{lem:bergsimp} lets us assume that $D$ is simple. Recall that $\eo = E\cup \eh$ is the ground set of $M(D)$. We write the edge set of $\mg$ as $E(\mg)= E\cup E(\B)$. Let $f:\R^{\eo}\to \R^{E(\mg)}$ be given by
\begin{equation*}
	f(x)_e =
	\begin{cases}
		x_e & \mbox{if } e\in E\\
		x_{\eh} &\mbox{if } e\in E(\B).
	\end{cases}
\end{equation*}

Let $\trop{\mg}_{\B}\subseteq \trop{\mg}$ denote the set of points that are constant on $E(\B)$. Let $x\in \trop{D}$, and let $Z$ be a cycle of $\mg$. We claim that the minimum coordinate of $f(x)_Z$ is achieved at least twice, i.e. that $f(x)\in \trop{\mg}_{\B}$. We argue three cases:
\begin{enumerate}[(i)]
	\item $Z\subseteq E$
	\item $Z\subseteq E(\B)$
	\item $Z$ is not contained in $E$ or $E(\B)$.
\end{enumerate}
In case (i), $Z$ is a circuit of $M(D)$, so the claim holds. In case (ii), $f(x)_Z$ is constant, so the claim holds. In case (iii), $Z\cap E$ is a union of crossings of $D$. In particular, there is a crossing $C\subseteq Z\cap E$ such that the minimum coordinate of $f(x)_Z$ occurs on $C$ or on $E(\B)$. Since $C\cup\eh$ is a circuit of $M(D)$, the minimum coordinate of $x_{C\cup \eh}$ occurs at least twice. This equals the minimum coordinate of $f(X)_Z$, so the claim follows in the final case.

Clearly $f$ is linear and injective; thus with the claim proven, we have
\begin{equation*}
	f(\trop{D}) = \trop{\mg}_{\B}.
\end{equation*}
Since $\berg{D}$ is the coarsest polyhedral fan supported on $\trop{D}$, the set $\{f(P) : P\in \berg{D}\}$ is the coarsest polyhedral fan supported on $\trop{\mg}_{\B}$. Lemma \ref{lem:subfan} says that $\berg{\mg}_\B$ is a subfan of $\berg{\mg}$, which must be the coarsest polyhedral fan supported on $\trop{\mg}_{\B}$. It follows that $\berg{\mg}_{\B} = \{f(P) : P\in \berg{D}\}$, so $f$ is an isomorphism $\berg{D}\to \berg{\mg}_{\B}$ as desired.
\end{proof}

\subsection{Phylogenetic trees and discriminantal arrangements}
\label{new:sec:phylo}

Let $T$ be a rooted tree with labeled leaves and a real-valued function $\omega$ on its edges. Suppose that the root is not a leaf, and that no non-root vertex has degree 2. The \emph{distance} between distinct vertices $i$ and $j$ of $T$, denoted by $d_T(i,j)$, is the (possibly negative) sum of $\omega(e)$ over the edges $e$ in the unique path between $i$ and $j$. An edge of $T$ is \emph{internal} if it is not incident to a leaf.

\begin{mydef}
The pair $(T,\omega)$ is called a \emph{phylogenetic tree} if the following hold:
\begin{enumerate}[(i)]
	\item The distance between the root and any leaf is the same
	\item $\omega(e)>0$ for every internal edge $e$.
\end{enumerate}
A phylogenetic tree with $n$ leaves is called a \emph{phylogenetic $n$-tree}.
\end{mydef}

The vertices of a phylogenetic tree form a poset in which the root is the unique minimal element and the leaves are the maximal elements. If two vertices $i$ and $j$ are adjacent with $i\leq j$, then $j$ is the \emph{child} of $i$. A phylogenetic tree is \emph{binary} if every non-leaf vertex has exactly two children. The \emph{most recent common ancestor} of two vertices $i$ and $j$ is their infimum. The \emph{combinatorial type} of a phylogenetic tree $(T,\omega)$ is simply the tree $T$ along with its root and leaf labeling.

There is a topological space $\mathcal{T}_n$, introduced in \cite{billera2001}, that realizes the space of phylogenetic $n$-trees and supports a polyhedral fan. We let $\mathcal{T}_n$ denote both the polyhedral fan and the underlying topological space.

To construct $\mathcal{T}_n$, first take one $(n-2)$-dimensional orthant for each combinatorial type of binary phylogenetic $n$-trees. These orthants are the maximal cones of $\mathcal{T}_n$. The facets of each orthant correspond to the internal edges of the binary tree; a point in a facet represents a tree in which the corresponding edge has been contracted. Glue the facets of any 2 orthants together when they represent the same combinatorial type of tree. The lower-dimensional faces represent further contractions; glue them together whenever they represent the same combinatorial type of tree. The common vertex of the orthants represents the tree with no internal edges. See Figure \ref{fig:t3space} for an illustration of $\mathcal{T}_3$.

\begin{figure}[ht]
	\centering
	\begin{tikzpicture}[scale=0.9]
\coordinate (p0) at (0,0);
\coordinate (p1) at (6.5,0);
\coordinate (p2) at (-5.5,3.5);
\coordinate (p3) at (-5.5,-3.5);
\draw[thick] (p1) -- (p0) -- (p2);
\draw[thick] (p0) -- (p3);

\def\xl{-5}
\def\yl{-0.8}
\coordinate (tl1) at ($(0,0)+(\xl,\yl)$);
\coordinate (tl2) at ($(1,0)+(\xl,\yl)$);
\coordinate (tl3) at ($(2,0)+(\xl,\yl)$);
\coordinate (tli) at ($(1.5,0.8)+(\xl,\yl)$);
\coordinate (tlr) at ($(1,1.6)+(\xl,\yl)$);
\draw (tl1) node[below=1mm] {$1$};
\draw (tl2) node[below=1mm] {$2$};
\draw (tl3) node[below=1mm] {$3$};
\draw (tl1) -- (tlr) -- (tl3);
\draw (tl2) -- (tli);
\draw[fill=black] (tl1) circle (1.5pt);
\draw[fill=black] (tl2) circle (1.5pt);
\draw[fill=black] (tl3) circle (1.5pt);
\draw[fill=black] (tli) circle (1.5pt);
\draw[fill=black] (tlr) circle (1.5pt);

\def\xb{-0.6}
\def\yb{-3.1}
\coordinate (tb1) at ($(0,0)+(\xb,\yb)$);
\coordinate (tb2) at ($(1,0)+(\xb,\yb)$);
\coordinate (tb3) at ($(2,0)+(\xb,\yb)$);
\coordinate (tbi) at ($(1.5,0.8)+(\xb,\yb)$);
\coordinate (tbr) at ($(1,1.6)+(\xb,\yb)$);
\draw (tb1) node[below=1mm] {$3$};
\draw (tb2) node[below=1mm] {$1$};
\draw (tb3) node[below=1mm] {$2$};
\draw (tb1) -- (tbr) -- (tb3);
\draw (tb2) -- (tbi);
\draw[fill=black] (tb1) circle (1.5pt);
\draw[fill=black] (tb2) circle (1.5pt);
\draw[fill=black] (tb3) circle (1.5pt);
\draw[fill=black] (tbi) circle (1.5pt);
\draw[fill=black] (tbr) circle (1.5pt);

\def\xt{2.8}
\def\yt{-2.5}
\coordinate (tt1) at ($(0,0)+(\xt,\yt)$);
\coordinate (tt2) at ($(1,0)+(\xt,\yt)$);
\coordinate (tt3) at ($(2,0)+(\xt,\yt)$);
\coordinate (tti) at ($(1.5,0.8)+(\xt,\yt)$);
\coordinate (ttr) at ($(1,1.6)+(\xt,\yt)$);
\draw (tt1) node[below=1mm] {$2$};
\draw (tt2) node[below=1mm] {$1$};
\draw (tt3) node[below=1mm] {$3$};
\draw (tt1) -- (ttr) -- (tt3);
\draw (tt2) -- (tti);
\draw[fill=black] (tt1) circle (1.5pt);
\draw[fill=black] (tt2) circle (1.5pt);
\draw[fill=black] (tt3) circle (1.5pt);
\draw[fill=black] (tti) circle (1.5pt);
\draw[fill=black] (ttr) circle (1.5pt);

\def\xd{1.7}
\def\yd{1.3}
\coordinate (td1) at ($(0,0)+(\xd,\yd)$);
\coordinate (td2) at ($(1,0)+(\xd,\yd)$);
\coordinate (td3) at ($(2,0)+(\xd,\yd)$);
\coordinate (tdr) at ($(1,1.2)+(\xd,\yd)$);
\draw (td1) node[below=1mm] {$1$};
\draw (td2) node[below=1mm] {$2$};
\draw (td3) node[below=1mm] {$3$};
\draw (td1) -- (tdr) -- (td3);
\draw (td2) -- (tdr);
\draw[fill=black] (td1) circle (1.5pt);
\draw[fill=black] (td2) circle (1.5pt);
\draw[fill=black] (td3) circle (1.5pt);
\draw[fill=black] (tdr) circle (1.5pt);

\draw[->] ($(tli)+(0.5,0)$) to [bend right]  ($0.7*(p0)+0.3*(p2) - (0.2,0.2)$);
\draw[->] ($0.5*(tbr)+0.5*(tb1)+(-0.5,0)$) to [bend left]  ($0.65*(p0)+0.35*(p3) + (0.2,-0.2)$);
\draw[->] ($(tti)+(0.5,0)$) to [bend right]  ($0.15*(p0)+0.85*(p1) + (0,-0.3)$);
\draw[->] ($0.5*(td1)+0.5*(tdr)+(-0.5,0)$) to [bend right]  ($(p0) + (0.1,0.3)$);
\end{tikzpicture}
	\caption{The polyhedral fan $\mathcal{T}_3$.}
	\label{fig:t3space}
\end{figure}

The cones of $\mathcal{T}_n$ correspond to the combinatorial types of phylogenetic $n$-trees. The product $\mathcal{T}_n\times \R$ supports a polyhedral fan inherited from $\mathcal{T}_n$; the factor $\R$ keeps track of the distance between the root and leaves, which is not fixed in the construction of $\mathcal{T}_n$.

\begin{prop}[{\cite[Proposition 3]{ardila2006}}]
There is an isomorphism of polyhedral fans
\begin{equation}
	f:\mathcal{T}_n\times \R\to \berg{K_n}.
	\label{eq:f}
\end{equation}
\label{prop:ardila}
\end{prop}

If we identify the leaves of a phylogenetic $n$-tree $(T,\omega)$ with the vertices of $K_n$, then the coordinates of $\trop{K_n}$ are indexed by pairs of leaves, and the piecewise-linear homeomorphism in \eqref{eq:f} is easy to describe:
\begin{equation}
	f(T,\omega)_{ij}=d_T(i,j)
	\label{eq:fform}
\end{equation}
for all leaves $i$ and $j$ of $T$. We will use this to obtain a version of Proposition \ref{prop:ardila} for ``complete'' networks.

\begin{mydef}
A set $S$ of leaves of $T$ is \emph{equidistant} if every pair of leaves in $S$ has the same most recent common ancestor. In other words, $S$ is equidistant if $d_T(i,j)=d_T(i,k)$ for all $i,j,k\in S$.
\end{mydef}

\begin{eg}
Consider the phylogenetic trees illustrated in Figure \ref{fig:equiphy}. Both trees have equidistant 3-sets (i.e., equidistant sets $S$ with $|S|=3$) marked in white.
\end{eg}

\begin{figure}[ht]
\centering
\begin{tikzpicture}[scale=2.5]
\coordinate (1l1) at (0,0);
\coordinate (1l2) at (0.4,0);
\coordinate (1l3) at (0.6,0);
\coordinate (1l4) at (1,0);
\coordinate (1l5) at (1.2,0);
\coordinate (1l6) at (1.6,0);
\coordinate (1l7) at (2,0);
\coordinate (2l1) at (0.2,0.2);
\coordinate (2l2) at (0.8,0.2);
\coordinate (2l3) at (1.4,0.2);
\coordinate (3l1) at (0.8,0.8);
\coordinate (4l1) at (1,1);

\draw (1l1) -- (4l1) -- (1l7);
\draw (1l2)--(2l1);
\draw (1l3)--(2l2)--(1l4);
\draw (2l2)--(3l1);
\draw (1l6)--(3l1);
\draw (1l5)--(2l3);

\draw[fill=white] (1l1) circle (1pt);
\draw[fill=white] (1l3) circle (1pt);
\draw[fill=white] (1l5) circle (1pt);
\draw[fill=black] (1l2) circle (1pt);
\draw[fill=black] (1l4) circle (1pt);
\draw[fill=black] (1l6) circle (1pt);
\draw[fill=black] (1l7) circle (1pt);
\draw[fill=black] (2l1) circle (0.8pt);
\draw[fill=black] (2l2) circle (0.8pt);
\draw[fill=black] (2l3) circle (0.8pt);
\draw[fill=black] (3l1) circle (0.8pt);
\draw[fill=black] (4l1) circle (0.8pt);

\def\a{2.5}
\coordinate (2-1l1) at ($(\a,0)+(0,0)$);
\coordinate (2-1l2) at ($(\a,0)+(0.66,0)$);
\coordinate (2-1l3) at ($(\a,0)+(1,0)$);
\coordinate (2-1l4) at ($(\a,0)+(2,0)$);
\coordinate (2-2l1) at ($(\a,0)+(0.33,0.33)$);
\coordinate (2-3l1) at ($(\a,0)+(1,1)$);

\draw (2-1l1) -- (2-3l1) -- (2-1l4);
\draw (2-2l1) -- (2-1l2);
\draw (2-1l3) -- (2-3l1);

\draw[fill=white] (2-1l1) circle (1pt);
\draw[fill=white] (2-1l3) circle (1pt);
\draw[fill=white] (2-1l4) circle (1pt);
\draw[fill=black] (2-1l2) circle (1pt);
\draw[fill=black] (2-2l1) circle (0.8pt);
\draw[fill=black] (2-3l1) circle (0.8pt);
\end{tikzpicture}
\caption{Two phylogenetic trees with equidistant 3-sets marked in white.}
\label{fig:equiphy}
\end{figure}

We consider phylogenetic trees with a prescribed equidistant $m$-set. Up to permutation of coordinates, this space depends only on the size of the equidistant set.

\begin{mydef}
Let $\mathcal{T}_{m,n}\times\R$ denote the subfan of $\mathcal{T}_{m+n}\times \R$ of phylogenetic $(m+n)$-trees with a prescribed equidistant $m$-set.
\end{mydef}

Let $D=D_{m,n}$ as in Example \ref{eg:dmn}, so that $\mg=K_{m+n}$. Restricting the map
\begin{equation*}
	f:\mathcal{T}_{m+n}\times \R\to \trop{\mg}
\end{equation*}
from \eqref{eq:f} to the set of phylogenetic $(m+n)$-trees with $\B$ equidistant, we obtain:

\begin{prop}
There is an isomorphism of polyhedral fans
\begin{equation}
	\mathcal{T}_{m,n}\times\R\to \berg{D_{m,n}}.
\end{equation}
\label{prop:newphylo}
\end{prop}

\subsection{A minimal tropical basis}

For any set $S\subseteq E$, let $V(S)$ be the set of points $x\in \R^E$ such that the minimum coordinate of $x_S$ is achieved at least twice. This set is the \emph{tropical hyperplane} defined by $S$. For any set $\mathcal{S}$ of subsets of $T$, let $V(\mathcal{S})=\bigcap_{S\in \mathcal{S}} V(S)$. For example, we have $\trop{M} = V(\mathcal{C})$, where $\mathcal{C}$ is the set of circuits of $M$.

\begin{mydef}
A set $\mathcal{S}$ of subsets of $E$ is a \emph{tropical basis} of $M$ if $V(\mathcal{S})=\trop{M}$.
\end{mydef}

In \cite{yu2007} the problem of computing a minimal tropical basis of a given matroid was posed, and it was shown that graphic matroids admit a unique minimal tropical basis. We compute a minimal tropical basis of $M(D)$. To \emph{paste} two sets is to take their symmetric difference. Since the symmetric difference operation is commutative and associative, we can paste any finite number of sets.

\begin{lem}
Let $T\subset E$, and let $\mathcal{S}$ be a set of subsets of $E$. If $T$ is obtained by pasting elements of $\mathcal{S}$, then $V(\mathcal{S})\subseteq V(T)$.

\begin{proof}
Suppose that $T$ is obtained by pasting elements $S_1$ and $S_2$ of $\mathcal{S}$. Let $x\in V(\mathcal{S})$. Suppose without loss of generality that $\min\{x_e :e\in S_1\}\leq \min \{x_e : e\in S_2\}$. If $\min\{x_e :e\in S_1\}$ is achieved on $S_1\cap S_2$, then it equals $\min\{x_e :e\in S_2\}$, so $\min\{x_e : e\in T\}$ is achieved at least once on each of $S_1$ and $S_2$. If not, then $\min\{x_e : e\in T\}$ is achieved at least twice on $S_1$. In either case we have $x\in V(T)$.
\end{proof}
\label{lem:pasting}
\end{lem}

A \emph{chord} of a circuit $C$ is any element $i$ such that there exist circuits $C_1$ and $C_2$ with $C_1\cap C_2 = i$ and $C_1\triangle C_2 = C$. Recall the types of circuits of $M(D)$ from Proposition \ref{prop:circuits}. A chord of a circuit $X\cup \eh$ of type (A) is any edge in $E\setminus X$ joining two vertices met by $X$. A chord of a circuit $C$ of type (B) is any edge in $E\setminus C$ joining two vertices met by $C$.

\begin{prop}
If $M(D)$ is simple, then there is a minimal tropical basis of $M(D)$ consisting of all chordless circuits of types (A) and (B) in Proposition \ref{prop:circuits}.
	
\begin{proof}
Suppose that a circuit $C\subseteq E$ of $M(D)$ of type (B) admits a chord $j$. Then there is a set $F\subseteq C$ such that $F\cup j$ and $(C\setminus F)\cup j$ are circuits of type (B). Pasting these two circuits yields $C$. Iterating this argument gives $C$ as a pasting of chordless cycles of type (B).
		
Let $\overline{X}=X\cup \eh$ be a circuit of type (A) for some $X\subseteq E$. If $i$ is a chord of $\overline{X}$, then $X\cup i$ contains a single circuit $Z$ of type (B), and the set $(X\setminus Z)\cup i$ is a crossing. Pasting $Z$ and $(X\setminus Z)\cup i$ yields $\overline{X}$. Iterating this argument and the argument from the first paragraph gives $\overline{X}$ as a pasting of chordless cycles of types (A) and (B).
		
Let $\mathcal{B}$ be the set of all chordless circuits of types (A) and (B). Let $Y$ be a circuit of type (C).  We claim that $V(\mathcal{B}) \subseteq V(Y)$. Let $C_1$ and $C_2$ be distinct crossings contained in $Y$, so that $Y=C_1\cup C_2$. Let $x \in V(\mathcal{B})$. Suppose without loss of generality that $\min\{x_e : e \in Y \cup \eh\}$ occurs on $C_1\cup\eh$. If $C_1$ and $C_2$ are disjoint and $x_{\eh}=\min\{x_e : e \in Y\}$, then this minimum is achieved at least once on each of $C_1$ and $C_2$. If $C_1$ and $C_2$ are disjoint and $x_{\eh} \neq \min\{x_e : e \in Y\}$, then this minimum is achieved at least twice on $C_1$. Therefore $x \in V(Y)$ in this case.
		
Suppose now that $C_1$ and $C_2$ are not disjoint, so that $Y$ contains a third crossing $C_3$. If $\min\{x_e : e \in C_2\} > \min\{x_e : e \in C_1 \cup \eh\}$, then the latter must occur twice on $C_1$; otherwise, $\min\{x_e : C_2\cup\eh\}$ occurs only once, on $\eh$, contradicting $x\in V(\mathcal{B})$. Hence $\min\{x_e : e\in Y\}$ occurs twice, on $C_1$. If $\min\{x_e : e \in C_2\}=\min\{x_e : e \in C_1 \cup \eh\}$ and these are both equal to $x_{\eh}$, then $\min\{x_e:e\in C_3\cup\eh\}$ occurs only once, on $\eh$, a contradiction. Hence if $\min\{x_e : e\in C_2\}=\min\{x_e:e\in C_1\cup\eh\}$, then these minima are less than $x_{\eh}$, and the minimum $\min\{x_e : e\in Y\}$ occurs at least 3 times. Therefore $x\in V(Y)$ again, proving the claim. It follows that $\mathcal{B}$ is a tropical basis of $M(D)$.
		
We show that $\mathcal{B}$ is minimal. Suppose that the circuit $C$ from above is chordless. For some $e\in C$, let $y \in \R^{\eo}$ be 1 on $C\setminus e$ and 0 on the rest of $\eo$. Any circuit in $\mathcal{B} \setminus C$ must contain at least two elements of $\eo$ not in $C$. The point $y$ achieves its minimum at least twice on such a circuit. Hence $y \in V(\mathcal{B}\setminus C)\setminus V(\mathcal{B})$, proving that $\mathcal{B}\setminus C$ is not a tropical basis.
		
Suppose now that the circuit $\overline{X}$ from above is chordless. Let $z\in \R^{\eo}$ be 1 on $X$ and 0 on $\eo\setminus X$. Any circuit of type (A) in $\mathcal{B}\setminus \overline{X}$ must contain $\eh$ and at least one edge in $E\setminus X$. Any circuit of type (B) in $\mathcal{B}\setminus \overline{X}$ must contain at least two elements of $E\setminus X$. The point $z$ achieves its minimum at least twice on any such circuit. Hence $\mathcal{B}\setminus \overline{X}$ is not a tropical basis, proving that $\mathcal{B}$ is minimal.
\end{proof}
\label{prop:mintrop}
\end{prop}
\section{Response matrices and the half-plane property}
\label{sec:hpp}

In this section, we show that every Dirichlet matroid has the half-plane property and use this to prove Theorem \ref{thm:realintro}. Let $\mathcal{S}$ be a finite set of finite sets. The \emph{generating polynomial} of $\mathcal{S}$ is the sum of the monomials $\prod_{e\in S} x_e$ over all $S\in \mathcal{S}$. The \emph{basis generating polynomial} of a matroid $M$ is the generating polynomial of the set of bases of $M$.

Let $\Re$ and $\Im$ denote the real and imaginary part operators, respectively. Write
\begin{equation*}
	\R_+^n=(0,\infty)^n.
\end{equation*}
A polynomial $f\in \C[x_1,\ldots,x_n]$ is \emph{stable} if $f$ has no zeros $\xx$ with $\Im(\xx)\in\R^n_+$.

\begin{mydef}
A matroid $M$ is \emph{HPP} (short for \emph{half-plane property}) if the basis generating polynomial of $M$ is stable.
\label{def:hpp}
\end{mydef}

For a list of known HPP and non-HPP matroids, see \cite{dleon2009}. The next fact, part of the ``folklore'' of electrical engineering, describes a fundamental family of examples \cite[p. 4]{wagner2005}.

\begin{prop}
Every graphic arrangement is HPP.
\label{prop:graphstable}
\end{prop}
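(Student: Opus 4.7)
The plan is to identify the basis generating polynomial of a graphic matroid with the weighted spanning tree polynomial, then prove stability via the Matrix-Tree theorem and the standard ``invertibility via positive-definite imaginary part'' trick.

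First I would reduce to the connected case: for a graph $\g$ the bases of $M(\g)$ are the spanning trees (or, if disconnected, spanning forests of each component, and the basis polynomial factors over components), so it suffices to assume $\g$ is connected. In that case the basis generating polynomial of $M(\g)$ is exactly
\[
w(\mathcal{B})(x) = \sum_{T \text{ spanning tree of }\g} \prod_{e\in T} x_e,
\]
and by the weighted Matrix-Tree theorem this polynomial equals $\det L_0(x)$, where $L_0(x)$ is any principal $(|V|-1)\times(|V|-1)$ submatrix of the weighted Laplacian $L(x) = B D(x) B^T$. Here $B$ is the signed vertex-edge incidence matrix for any orientation of $\g$, and $D(x)$ is the diagonal matrix with $D(x)_{ee} = x_e$.

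Next I would recall the standard fact: a complex square matrix $A$ whose imaginary part $\Im(A) = \tfrac{1}{2i}(A - A^*)$ is positive definite is automatically invertible, since $Av=0$ forces $0 = \Im(v^*Av) = v^*\Im(A)v$, contradicting positive definiteness unless $v=0$. I would apply this to $L_0(x)$. Writing $B_0$ for the reduced incidence matrix (delete the row corresponding to the omitted vertex), we have $L_0(x) = B_0 D(x) B_0^T$, and therefore
\[
\Im L_0(x) = B_0\, D(\Im x)\, B_0^T.
\]
Since $\g$ is connected, $B_0$ has full row rank $|V|-1$. Hence if $\Im x_e > 0$ for every edge $e$, the diagonal matrix $D(\Im x)$ is positive definite, and the congruence $B_0 D(\Im x) B_0^T$ is positive definite as well. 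By the invertibility criterion above, $L_0(x)$ is invertible, so $\det L_0(x) \neq 0$, and $w(\mathcal{B})(x) \neq 0$.

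This establishes that the basis generating polynomial has no zero in the strictly upper half-plane orthant, which is exactly stability, so $M(\g)$ is HPP. There is no real obstacle here; the only thing to be careful about is the reduction to the connected case and the verification that $B_0$ has full row rank, both of which are standard. The conceptual content is the congruence $L_0(x) = B_0 D(x) B_0^T$, which transfers the positivity of $\Im x$ coordinatewise into positive definiteness of $\Im L_0(x)$.
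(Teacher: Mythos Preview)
Your argument is correct. The reduction to the connected case, the Matrix-Tree identification $w(\mathcal{B})=\det L_0(x)$, and the invertibility criterion via a positive-definite Hermitian imaginary part are all standard and correctly assembled. One small point worth making explicit: $L_0(x)$ is complex \emph{symmetric}, so its Hermitian imaginary part $\tfrac{1}{2i}(L_0(x)-L_0(x)^*)$ coincides with the entrywise imaginary part $L_0(\Im x)=B_0 D(\Im x)B_0^T$; without symmetry this identification would fail, and the congruence argument would not apply directly.

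As for comparison with the paper: there is nothing to compare. The paper does not prove this proposition at all; it simply records it as a known result and cites \cite[Theorem 1.1]{choe2004}. Your proof is essentially the classical one (going back at least to the electrical-engineering folklore the paper alludes to in the introduction), and it is exactly the kind of argument the cited reference contains.
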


\subsection{Laplacian and response matrices}

The \emph{admittance} of an edge $e\in E$ is a complex number that measures how easily current may flow through $e$. In this section, we fix admittances $x\in \C^E$. We assume that $D$ is simple, since parallel edges can be treated as a single edge whose admittance is the sum of the individual admittances, and since current does not flow through loops or edges between boundary nodes.

Let $L=L(\xx)$ denote the $V\times V$ \emph{weighted Laplacian matrix}, given by
\begin{equation*}L_{ij}=\begin{cases}\sum_{k\sim i} x_{ik}&\mbox{if }i=j\\ -x_{ij}&\mbox{if }i\sim j\\ 0&\mbox{else}.\end{cases}\end{equation*}
Write $L$ in block form as
\begin{equation}
	L=
	\begin{bmatrix}
		A   & B\\
		B^T & C
	\end{bmatrix},
	\label{eq:lform}
\end{equation}
where the rows and columns of $A$ are indexed by $\B$. If $C$ is invertible, then the \emph{response matrix} of $D$ is the Schur complement $L/C$, given by
\begin{equation}
	\rs=A-BC^{-1}B^T.
\end{equation}
If voltages $u\in \R^\B$ are applied to the boundary of $D$, e.g. by attaching batteries, then $\rs u$ is the vector of resulting currents at the boundary nodes.

\begin{lem}
Suppose that $C$ is invertible. Let $u\in \R^\B$,  $v=-C^{-1}B^Tu$ and $\phi=\rs u$. We have
\begin{equation}
	\begin{bmatrix}
		A   & B\\
		B^T & C
	\end{bmatrix}
	\begin{bmatrix}
		u \\ v
	\end{bmatrix}
	=
	\begin{bmatrix}
		\phi \\ 0
	\end{bmatrix}.
	\label{eq:block}
\end{equation}
\label{lem:harm}
\begin{proof}
This is a direct computation.
\end{proof}
\end{lem}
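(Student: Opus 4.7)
The plan is to verify the identity by expanding the block product one row at a time. The choice $v = -D^{-1}B^T u$ is precisely the value that kills the lower block, so only a cosmetic computation is needed there; the definition $\rs = A - BD^{-1}B^T$ then arranges for the upper block to evaluate to $\rs u$, which is $\phi$ by definition.

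Concretely, for the bottom block I would compute $B^T u + D v = B^T u + D(-D^{-1} B^T u) = B^T u - B^T u = 0$, where invertibility of $D$ (the standing hypothesis of the lemma) is what licenses the cancellation $D \cdot D^{-1} = I$. For the top block I would compute $A u + B v = A u - B D^{-1} B^T u = (A - B D^{-1} B^T) u = \rs u = \phi$, chaining the definitions of $v$, $\rs$, and $\phi$ in order. There is no real obstacle: the lemma is just the Schur-complement identity for the Laplacian, and the content, as foreshadowed by the surrounding discussion, is the physical interpretation that when $u$ is imposed as boundary voltage, the unique harmonic extension has interior values $v = -D^{-1} B^T u$ and produces boundary currents $\rs u$.
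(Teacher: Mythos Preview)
Your proposal is correct and is exactly the direct computation the paper has in mind; the paper's own proof is the single sentence ``This is a direct computation.'' You have simply spelled out that computation block by block, so there is no substantive difference.
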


\begin{lem}
If $\Re(\xx)\in \R^E_+$, then $\Re(\rs)$ is positive semidefinite.

\begin{proof}
Suppose that $\Re(\xx)\in \R^E_+$, and let $u\in \R^\B$. Let $f\in \C^V$ be the column vector on the left side of \eqref{eq:block}. Order the boundary nodes $1,\ldots,m$ and the interior vertices $m+1,\ldots,d$. We have
\begin{align*}
	u^T \Re(\rs)u &= \sum_{i,j=1}^m u_i \Re(\rs)_{ij} u_j \\
		&= \sum_{i,j=1}^m \Re(u_i \overline{\rs_{ij}u_j})\\
		&= \sum_{i=1}^m \Re(u_i \overline{[\rs u]_i}).
	\label{eq:innerprod}
\end{align*}
Lemma \ref{lem:harm} implies that $Lf|_{\B}=\rs u$ and $Lf|_{V\setminus \B}=0$, so
\begin{equation*}
	\sum_{i=1}^m \Re(u_i \overline{[\rs u]_i}) 
		= \sum_{i=1}^d \Re(f_i \overline{[Lf]_i}).
\end{equation*}
Write $x_{ij} = 0$ for all non-adjacent $i,j\in V$. Direct computation gives
\begin{equation*}
	[Lf]_i = \sum_{j=1}^d x_{ij} (f_i-f_j),
\end{equation*}
so
\begin{align*}
	\sum_{i=1}^d \Re(f_i \overline{[Lf]_i})
		&= \sum_{i,j=1}^d \Re(f_i \overline{x_{ij}(f_i-f_j)})\\
	&=\sum_{1\leq i< j\leq d} \Re(f_i \overline{x_{ij}(f_i-f_j)})
		+\sum_{1\leq j<i\leq d} \Re(f_i \overline{x_{ij}(f_i-f_j)})\\
	&=\sum_{1\leq i< j\leq d} \Re(f_i \overline{x_{ij}(f_i-f_j)})
		-\sum_{1\leq i<j\leq d} \Re(f_j \overline{x_{ij}(f_i-f_j)})\\
	&=\sum_{1\leq i< j\leq d} \Re((f_i-f_j) \overline{x_{ij}(f_i-f_j)})\\
	&=\sum_{1\leq i<j \leq d} \Re(x_{ij})|f_i - f_j|^2
\end{align*}
is positive. The result follows.
\end{proof}
\label{lem:psd}
\end{lem}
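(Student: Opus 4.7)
The plan is to interpret $u^T \Re(\rs) u$, for $u \in \R^\B$, as the real part of a Dirichlet-type energy of a complex-valued function on $V$ that is ``harmonic'' on the interior $V \setminus \B$. By Lemma \ref{lem:harm}, setting $v = -D^{-1}B^T u$ and $f = \begin{bmatrix} u \\ v \end{bmatrix} \in \C^V$ produces an extension of $u$ with $[Lf]_i = [\rs u]_i$ for $i \in \B$ and $[Lf]_i = 0$ for $i \in V \setminus \B$.

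First I would rewrite $u^T \Re(\rs) u = \sum_{i \in \B} \Re\bigl(u_i \overline{[\rs u]_i}\bigr)$, using that $u$ is real so $\Re(z) = \Re(\bar z)$ may be applied freely. Because $[Lf]_i$ vanishes off $\B$ and $f$ restricts to $u$ on $\B$, the same real number equals
\[
u^T \Re(\rs) u \;=\; \sum_{i \in V} \Re\bigl(f_i \overline{[Lf]_i}\bigr).
\]
Next I would substitute $[Lf]_i = \sum_j x_{ij}(f_i - f_j)$ (setting $x_{ij}=0$ when $ij\notin E$), split the resulting double sum into the parts with $i<j$ and $i>j$, and use the symmetry $x_{ij}=x_{ji}$ to merge the two halves. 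The key identity that drops out is
\[
\sum_{i \in V} \Re\bigl(f_i \overline{[Lf]_i}\bigr) \;=\; \sum_{i<j} \Re(x_{ij})\,|f_i - f_j|^2,
\]
which is manifestly nonnegative under the hypothesis $\Re(x) \in \R^E_+$, yielding the conclusion.

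The step I expect to be fiddliest is the symmetrization: the naive expansion $\sum_{i,j} \Re\bigl(f_i \overline{x_{ij}(f_i-f_j)}\bigr)$ must be reorganized so that the $f_i$ and $-f_j$ terms combine into $(f_i-f_j)\overline{(f_i-f_j)} = |f_i-f_j|^2$, and one has to track conjugates carefully to see that only $\Re(x_{ij})$ (and not some mixed real/imaginary combination) survives. A secondary issue, not made explicit in the statement but needed for $\rs$ to exist, is the invertibility of $D$ when $\Re(x) \in \R^E_+$; this follows from the very same energy identity applied to an $f$ satisfying $Df=0$ and extended by $f|_\B = 0$, which forces $f$ constant on each connected component of $\g$ and hence $f \equiv 0$ by the connectivity of $\g$.
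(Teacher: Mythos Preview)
Your proposal is correct and follows essentially the same argument as the paper: extend $u$ to $f=\begin{bmatrix}u\\-D^{-1}B^Tu\end{bmatrix}$ via Lemma~\ref{lem:harm}, rewrite $u^T\Re(\rs)u$ as $\sum_i \Re(f_i\overline{[Lf]_i})$, expand the Laplacian, and symmetrize over $i<j$ to obtain $\sum_{i<j}\Re(x_{ij})|f_i-f_j|^2\ge 0$. Your side remark on the invertibility of $D$ is a useful addition that the paper leaves implicit.
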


\subsection{Basis generating polynomials}

We establish formulas for the basis generating polynomial of $M(D)$ and use them to prove that every Dirichlet arrangement has the half-plane property. This can also be proven using the results of Appendix \ref{sec:printrun}, but the connection to the response matrix is lost.

Let $\pb$ denote the basis generating polynomial of $M(D)$. For $i=0,1$, let $P_i$ denote the generating polynomial of the set $\Sigma_i$ from Definition \ref{def:suncscro}. Proposition \ref{prop:bases} implies that
\begin{equation}
	\pb(\xx,\xh )= \pcro(\xx)+\xh \punc(\xx)
	\label{eq:pb1}
\end{equation}
for all $(\xx,\xh)\in \C^E\times \C$, where $\xh$ is the variable corresponding to $\eh$. Let $\tr\rs$ denote the trace of the response matrix $\rs$.

\begin{lem}
For all $(\xx,\xh )\in \C^E\times \C$ with $\Im(\xx)\in \R^E_+$, the basis generating polynomial of $M(D)$ is given by
\begin{equation}
	\pb(\xx,\xh )=\punc(\xx)\left(\xh +\frac{1}{2}\tr\rs\right).
\end{equation}
	
\begin{proof}
For all distinct boundary nodes $i$ and $j$ let
\begin{equation*}
	\sij = \{ F \in \scro : F \mbox{ contains a path from } i \mbox{ to } j \}.
\end{equation*}
The matrix-tree theorem for principal minors (see e.g. \cite{chaiken1982}) gives $\det C = \punc$, where $C$ is defined in \eqref{eq:lform}. Note that $\punc$ is the basis generating polynomial of $M(\og)$, where $\og$ is the graph obtained by identifying all boundary nodes as a single vertex. Proposition \ref{prop:graphstable} implies that $\rs$ is well defined whenever $\Im(\xx)\in \R^E_+$. Thus if $\Im(\xx)\in \R^E_+$, then for all $i\neq j$ we have
\begin{equation}
	-\rs_{ij}=\frac{P_{ij}}{\punc},
	\label{eq:rs1}
\end{equation}
where $P_{ij}$ is the generating polynomial of $\sij$ (see e.g. \cite[Proposition 2.8]{kenyon2011}). It is not hard to see that $\rs$ is symmetric, and that every row sum of $\rs$ is zero \cite[p. 3]{curtis2000}. Since $\pcro = \frac{1}{2}\sum_{i\neq j} P_{ij}$, we deduce that $\sum_{i\neq j}\rs_{ij}=-\tr\rs$. The result now follows from \eqref{eq:pb1} and \eqref{eq:rs1}.
\end{proof}
\label{lem:pbfor}
\end{lem}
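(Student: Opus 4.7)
The plan is to use \eqref{eq:pb1}, which writes $\pb(x,\xh) = \pcro(x) + \xh\,\punc(x)$, to reduce the claim to the identity $\pcro(x) = \tfrac12\,\punc(x)\,\tr\rs(x)$ whenever $\Im(x) \in \R^E_+$. Since groves of $N$ with no crossing correspond exactly to spanning trees of $\og$, $\punc$ is the basis generating polynomial of the graphic matroid $M(\og)$, so Proposition \ref{prop:graphstable} guarantees $\punc(x) \neq 0$ on this domain; thus $\rs = A - BD^{-1}B^T$ and $\tr\rs$ are well defined there.

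First I would refine the combinatorial description of $\scro$ by stratifying its elements according to the pair of boundary nodes joined by the unique crossing. For distinct $i,j \in \B$, let $\Sigma_{ij}$ denote the set of groves in $\scro$ whose crossing joins $i$ and $j$, and set $P_{ij} = w(\Sigma_{ij})$. Each $F \in \scro$ belongs to exactly one unordered pair $\{i,j\}$, so
\[
\pcro(x) = \tfrac12\sum_{i \neq j} P_{ij}(x).
\]

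Next I would identify these quantities matrix-theoretically via the Principal Minors Matrix-Tree Theorem of Chaiken. Applied to the block $D$ of \eqref{eq:lform} it yields $\det D = \punc(x)$, since rooted spanning forests of $\g$ with roots in $\B$ correspond precisely to groves of $N$ with no crossing. A standard refinement of the theorem, combined with the Schur complement formula defining $\rs$, then identifies off-diagonal entries of $\rs$ with weighted sums over forests containing a specified boundary-to-boundary path:
\[
-\rs_{ij}(x) = \frac{P_{ij}(x)}{\punc(x)} \quad \text{for } i \neq j,
\]
as recorded in \cite[Proposition 2.8]{kenyon2011}.

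Finally I would invoke two classical properties of the response matrix: symmetry, and vanishing of each row sum (the latter because constant vectors lie in the kernel of $L$; see \cite{curtis2000}). These give $\rs_{ii} = -\sum_{j \neq i}\rs_{ij}$, hence
\[
\tr\rs(x) = -\sum_{i \neq j}\rs_{ij}(x) = \frac{1}{\punc(x)}\sum_{i \neq j}P_{ij}(x) = \frac{2\,\pcro(x)}{\punc(x)},
\]
which is the desired identity; substituting into \eqref{eq:pb1} completes the proof. The main obstacle is the principal-minors matrix-tree computation that matches $-\punc\,\rs_{ij}$ with $P_{ij}$: one must carefully track signs through the Schur complement and verify that the weighted-forest interpretation of the resulting minors agrees with the notion of a crossing of $N$. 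Once this combinatorial identification is in hand, symmetry and the row-sum identity turn the sum over pairs into a trace and the proof is immediate.
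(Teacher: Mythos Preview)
Your proposal is correct and follows essentially the same route as the paper: you stratify $\scro$ by the boundary pair $\{i,j\}$ of the crossing, invoke the Principal Minors Matrix-Tree Theorem for $\det D=\punc$, use the Kenyon identity $-\rs_{ij}=P_{ij}/\punc$, and then convert $\sum_{i\neq j}\rs_{ij}$ into $-\tr\rs$ via symmetry and the zero-row-sum property. The only differences are expository.
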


\begin{prop}
Every Dirichlet matroid has the half-plane property.
\begin{proof}
Let $(\xx,\xh)\in \C^E\times \C$ with $\Re(\xx)\in \R^E_+$ and $\Re(\xh)>0$. Since $\pb$ is homogeneous, it suffices to show that $\pb(\xx,\xh)\neq 0$. Since $\punc$ is the basis generating polynomial of $M(\og)$, Proposition \ref{prop:graphstable} implies that $\punc(\xx)\neq 0$. Thus by Lemma \ref{lem:pbfor} it suffices to show that $\Re(\tr\rs(\xx))\geq 0$ whenever $\Re(\xx)\in \R_+^n$. This follows from Lemma \ref{lem:psd}.
\end{proof}
\label{prop:hpp}
\end{prop}

\begin{remark}
Given an edge $e\in E$, the quantities $\Re(\xx_e)$ and $\Im(\xx_e)$ are called the \emph{conductance} and \emph{susceptance} of $e$, respectively. Each edge represents an ideal electrical component whose type can determine the signs of its conductance and susceptance. For example, a \emph{resistor} has positive conductance and zero susceptance. A \emph{capacitor} (resp., \emph{inductor}) has zero conductance and positive (resp., negative) susceptance.
\end{remark}

\subsection{Interlacing zeros and proof of Theorem \ref{thm:realintro}}

For $1\leq i\leq n$, the \emph{Wronskian} with respect to $x_i$ is the bilinear map $W_{x_i}$ on $\R[x_1,\ldots,x_n]$ given by
\begin{equation}
	W_{x_i}(f,g) = f \cdot \partial_i g - \partial_i f\cdot g.
\end{equation}
If $\xx,y\in \R^n$, then $W_t(f(\xx+t\yy),g(\xx+t\yy))$ is a univariate polynomial in $t$. Two polynomials $f,g\in \R[x_1,\ldots,x_n]$ are in \emph{proper position}, written $f\ll g$, if for all $(\xx,\yy)\in \R^n\times\R^n_+$ we have
\begin{equation}
	W_t(f(\xx+t\yy),g(\xx+t\yy))(t) \geq 0
\end{equation}
for all $t\in \R$. For technical reasons we also declare that $0\ll f$ and $f\ll 0$ for all $f$. If $f\ll g$, then for any $(\xx,\yy)\in \R^n\times\R^n_+$ the real zeros of $f(\xx+t\yy)$ and $g(\xx+t\yy)$ \emph{interlace} in the following sense (see \cite{branden2007}).
	
\begin{mydef}
Two finite sets $S,T\subseteq \R$ whose cardinalities differ by at most 1 are said to \emph{interlace} if the elements $s_i$ of $S$ and $t_i$ of $T$ can be indexed so that either $s_1\leq t_1\leq s_2\leq t_2\leq \cdots$ or $t_1\leq s_1\leq t_2\leq s_2\leq \cdots$. More generally, two finite subsets of a line $L\subseteq \R^n$ \emph{interlace} if their images interlace under any homeomorphism $L\to \R$.
\end{mydef}
	
\begin{prop}[{\cite[Corollary 5.5]{branden2007}}]
Let $f,g\in \R[x_1,\ldots,x_n]$. We have $g\ll f$ if and only if $f+\xh g\in \R[x_0,\ldots,x_n]$ is stable.
\label{prop:branpp}
\end{prop}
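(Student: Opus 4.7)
I would deduce Proposition~\ref{prop:branpp} from the univariate Hermite--Biehler theorem via the standard line-restriction criterion for multivariate stability. Recall that $h \in \R[x_0, \ldots, x_n]$ is stable iff, for every $(a, b) \in \R^{n+1} \times \R_+^{n+1}$, the univariate restriction $h(a + tb) \in \R[t]$ is either identically zero or has no zero $t$ with $\Im(t) > 0$. This reduces the proposition to a statement about univariate polynomials on real lines.

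Given $(x, y) \in \R^n \times \R_+^n$, set $p(t) = f(x + ty)$ and $q(t) = g(x + ty)$. By the definition of $W_t$, the condition $g \ll f$ is exactly that the univariate Wronskian $W(q, p)(t) = q(t)p'(t) - q'(t)p(t)$ is non-negative on $\R$ for every such $(x, y)$. On the other side, restricting $f + x_0 g$ to the line $(a_0, x) + t(b_0, y)$ with $b_0 > 0$ gives, since $f$ and $g$ do not involve $x_0$,
\begin{equation*}
	(f + x_0 g)\bigl((a_0, x) + t(b_0, y)\bigr) = p(t) + (a_0 + t b_0)\, q(t).
\end{equation*}
The affine substitution $s = a_0 + t b_0$ is a bijection between pairs $(a_0, b_0) \in \R \times \R_+$ and points $s$ with $\Im(s) > 0$ as $t$ ranges over $\{\Im(t) > 0\}$ (explicitly, $b_0 = \Im(s)/\Im(t)$ and $a_0 = \Re(s) - b_0 \Re(t)$). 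Hence the line-restriction criterion shows that $f + x_0 g$ is stable iff the bivariate polynomial $p(t) + s q(t) \in \R[s, t]$ is stable for every $(x, y)$.

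The proposition is thus reduced to the following univariate claim: for real polynomials $p, q$, the bivariate polynomial $p(t) + s q(t)$ is stable iff $W(q, p) \geq 0$ on $\R$ (interpreted with the paper's $0 \ll f$ conventions). This is essentially the classical Hermite--Biehler theorem: the Wronskian inequality, required to hold along every real line, forces $p$ and $q$ to be real-rooted with properly interlacing zeros, and this interlacing is exactly what makes $p + iq$ (and hence $p + s q$ for every $s$ with $\Im(s) > 0$) have all its zeros in the closed lower half plane of $\C$.

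The main obstacle I anticipate is the Hermite--Biehler step itself: verifying that the Wronskian inequality along all real lines really does imply real-rootedness of the restrictions $p$ and $q$ and proper interlacing of their zeros, and handling the degenerate cases (common real roots, identically vanishing restrictions, non-generic leading coefficients) that are exactly the reason the paper introduces the conventions $0 \ll f$ and $f \ll 0$. One must also track the orientation of the Wronskian carefully: in the paper's convention $g \ll f$ corresponds to $W(q, p) \geq 0$ (arguments in that order), which is the sign convention compatible with the ``upper half plane'' definition of stability.
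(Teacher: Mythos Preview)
The paper does not prove this proposition; it is quoted from \cite[Corollary 5.5]{branden2007} and used as a black box, so there is no paper proof to compare your plan against.

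Your outline (line restriction plus Hermite--Biehler) is the natural route, but the obstacle you flag at the Hermite--Biehler step is fatal rather than merely technical. With the paper's Wronskian-only definition of $\ll$, the stated equivalence is actually false in the direction $g\ll f\Rightarrow f+x_0 g$ stable. Take $n=1$ and $f=g=x_1^2+1$: for every $(x,y)\in\R\times\R_+$ the restrictions coincide, so $W_t(q,p)\equiv 0\ge 0$ and $g\ll f$ holds by the paper's definition, yet $f+x_0 g=(1+x_0)(x_1^2+1)$ vanishes at $(x_0,x_1)=(i,i)$ and is not stable. (A multiaffine example is $f=g=x_1x_2+1$.) The step where you hope the Wronskian inequality along all lines ``forces $p$ and $q$ to be real-rooted'' therefore cannot go through in general.

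The fix is that Br\"and\'en's actual characterization carries an additional hypothesis the paper's definition drops: one needs $f$ and $g$ (or at least one of them) to be stable to begin with. The paper's own Proposition~\ref{prop:deltae} quietly reinstates exactly this hypothesis, and the only direction of Proposition~\ref{prop:branpp} the paper actually uses (stability of $P$ implies $P_0\ll P_1$) is the sound one. Your plan works once you add the missing stability assumption: if the restrictions $p$ and $q$ are already real-rooted, the Wronskian sign condition is the classical interlacing criterion and Hermite--Biehler applies directly.
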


\begin{proof}[Proof of Theorem \ref{thm:realintro}]
This follows from \eqref{eq:pb1}, Proposition \ref{prop:hpp}, and Proposition \ref{prop:branpp}.
\end{proof}

\begin{eg}
Let $D$ be a star network, and write $m=|\B|$. Writing the conductances as $\mathbf{x}=(x_1,\ldots,x_m)$, we have
\begin{equation*}
	\tr\Lambda = \frac{\sum_{i\neq j} x_ix_j}{\sum_i x_i}.
\end{equation*}
Every line in $\R^m$ with positive direction vector intersects exactly 2 zeros of $\tr\Lambda$ (counted with multiplicity) and one pole in between. For example, if $\mathbf{x}=(-1,\ldots,-1,n-1)$ and $\mathbf{y}=(1,\ldots,1)$, then the zeros occur at $t=\pm 1$, and the pole occurs at $t=0$. When $m=2$, the set of zeros is the union $x_1x_2=0$ of the coordinate axes, and the set of poles is the line $x_1+x_2=0$.
\end{eg}
\section{Characteristic polynomials and graph colorings}
\label{sec:char}

We prove Theorem \ref{thm:cpintro} after discussing the precoloring polynomial. This polynomial has been studied in connection with various combinatorial objects and games \cite{herzberg2007, lutz2019hyp, stanley2015}. It is also the fundamental object of the precoloring extension problem \cite{biro1992}. We assume that $D$ is simple; the proofs are essentially unchanged.

\subsection{Results from hyperplane arrangements}

Given a matroid or hyperplane arrangement $M$, write $\cp{M}$ for the characteristic polynomial of $M$. If $M$ is a matroid, then write $\rcp{M}$ for the \emph{reduced characteristic polynomial} of $M$, given by
\begin{equation*}
	\rcp{M}(\l) = (\l-1)^{-1}\cp{M}(\l).
\end{equation*}
Assume that $\g$ is loopless, and write $\cp{\g}$ for the chromatic polynomial of $\g$. We have
\begin{equation}
	\cp{\g}(\l)= \l \cp{M(\g)}(\l).
	\label{eq:chromdef}
\end{equation}

\begin{mydef}
The \emph{precoloring polynomial} of $D$ is the reduced characteristic polynomial of $M(D)$:
\begin{equation}
	\cp{D} = \rcp{M(D)}
\end{equation}
\end{mydef}

If $k\geq |\B|$ is an integer, then $\cp{D}(k)$ is the number of ways to extend a given injective $k$-coloring of $\B$ to a proper $k$-coloring of $\g$.

\begin{prop}[{\cite[Proposition 3.9]{lutz2019hyp}}]
We have
\begin{equation}
	\cp{D}(\l)=(\l)_m^{-1}\cp{\mg}(\l),
	\label{eq:cpformula}
\end{equation}
where $(\l)_m=\l(\l-1)\cdots(\l-m+1)$ is a falling factorial.
\end{prop}

\begin{eg}
Suppose that $D=D_{m,n}$ as in Example \ref{eg:dmn}. Here $\mg = K_{m+n}$ is complete, so that $\cp{\mg}(\l)=(\l)_{m+n}$. The formula \eqref{eq:cpformula} gives
\begin{equation*}
	\cp{D}(\l) = (\l-m)_n.
\end{equation*}
\end{eg}

\begin{eg}
Let $D$ be the network whose interior vertices form a cycle and whose boundary nodes are pendants, with each interior vertex adjacent to exactly 1 boundary node. The case $m=6$ is illustrated in Figure \ref{fig:hexnet}. We propose the following closed form and recurrence relation for $\cp{D}$, which we have verified for $m\leq 11$. Write $\cp{m} = \cp{D}$.
	
\begin{conj}
For $m\geq 3$ we have
\begin{equation}
	\cp{m}(\l+1) 
		= \frac{\omega_+^m + \omega_-^m}{2^m} + (-1)^m(\l-m-1),
\end{equation}
where $\omega_\pm = \l-2\pm\sqrt{\l^2+4}$. In particular, $\cp{m}$ satisfies the recurrence
\begin{equation}
	\begin{aligned}
		\cp{3}(\l)&=\l^3 - 6\l^2 + 14\l - 13\\
		\cp{4}(\l)&=\l^4 - 8\l^3 + 28\l^2 -  51\l + 41\\
		\cp{m+2}(\l)&= (\l-1)\cp{m+1}(\l)+(\l+1)\cp{m}(\l)+(-1)^m(-2\l+m-1).
	\end{aligned}
\end{equation}
\end{conj}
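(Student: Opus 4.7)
The plan is to establish the closed form via a transfer-matrix computation, after which the stated recurrence drops out from the characteristic polynomial of the transfer matrix.

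First I would reduce the problem to a count of constrained colorings. By Proposition~\ref{prop:cphyp}, $\cp{m}(\l)=\cp{\mg}(\l)/(\l)_m$, where $\mg$ consists of the interior cycle $v_1\cdots v_m$, a clique $K_m$ on the boundary nodes $b_1,\dots,b_m$, and the matching joining each $v_i$ to $b_i$. In any proper coloring the boundary receives $m$ distinct colors $c_1,\dots,c_m$; by symmetry I may take $c_i=i$, so $\cp{m}(\l)$ counts the proper $\l$-colorings $(x_1,\dots,x_m)$ of the interior $C_m$ satisfying the pointwise constraints $x_i\neq i$. Writing $T=J-I$ for the $\l\times\l$ matrix enforcing $x\neq y$ and $D_i=I-E_{ii}$ for the constraint $x_i\neq i$, this count becomes the cyclic trace
\[
\cp{m}(\l)=\operatorname{tr}(D_1 T D_2 T\cdots D_m T)=\sum_{S\subseteq[m]}(-1)^{|S|}\operatorname{tr}\!\Bigl(\prod_{i=1}^m X_i(S)\Bigr),
\]
where $X_i(S)=E_{ii}T$ when $i\in S$ and $X_i(S)=T$ otherwise.

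Next I would evaluate each inner trace using the eigenstructure of $T$. The identity $E_{ii}T^{\ell}E_{jj}=(T^{\ell})_{ij}\,E_{ij}$ together with the formula $(T^{\ell})_{ab}=\alpha_{\ell}+(-1)^{\ell}\delta_{ab}$, where $\alpha_{\ell}:=((\l-1)^{\ell}-(-1)^{\ell})/\l$, collapses the inner trace to a product of $(T^{\ell_j})$-entries indexed by the cyclic gaps $\ell_j$ between consecutive elements of $S$. For $|S|\geq 2$ every gap connects two distinct boundary colors and only the off-diagonal value $\alpha_{\ell_j}$ contributes, but for $|S|=1$ the single arc wraps around to the same endpoint color and one must use the diagonal value $\alpha_m+(-1)^m$ instead. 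The $|S|\neq 1$ contributions can then be packaged as $\operatorname{tr}((T')^m)$ for the $3\times 3$ transfer matrix
\[
T'=\begin{pmatrix}-1 & (\l-1)/\l & 1/\l\\ -(\l-1) & \l-1 & 0\\ 1 & 0 & -1\end{pmatrix},
\]
obtained by splitting $\alpha_{\ell}=(\l-1)^{\ell}/\l-(-1)^{\ell}/\l$ and labeling each arc with a ``type'' $\pm$ tracking which summand is taken. A direct computation gives the characteristic polynomial of $T'$ as $x(x^2-(\l-3)x-(\l-1))$, whose nonzero roots are precisely $\omega_\pm/2$ (after identifying the conjecture's $\l$ with $\l-1$ here). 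Hence $\operatorname{tr}((T')^m)=(\omega_+^m+\omega_-^m)/2^m$. Separately accounting for the $|S|=1$ discrepancy contributes an extra $-m(-1)^m$, while reconciling the $S=\emptyset$ term with the ``all $+$'' and ``all $-$'' state assignments of $T'$ contributes $(\l-2)(-1)^m$; assembling these yields $\cp{m}(\l)=\operatorname{tr}((T')^m)+(-1)^m(\l-m-2)$, which after the shift $\l\mapsto\l+1$ matches the conjectured closed form.

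The main obstacle is the bookkeeping around the $|S|=1$ case, whose cyclic arc has coinciding endpoints and therefore requires the diagonal value of $T^m$ rather than $\alpha_m$; tracking this discrepancy cleanly—and avoiding double counting between the $S=\emptyset$ term and the ``all $\pm$'' state assignments of $T'$—is what produces the specific constant in the correction polynomial $(-1)^m(\l-m-1)$. Once the closed form is established, the stated recurrence follows by applying the quadratic factor of the characteristic polynomial of $T'$ to the sequence $a_m=(\omega_+^m+\omega_-^m)/2^m$ and combining with the two-term inhomogeneous recurrence satisfied by the polynomial correction, with the initial conditions at $m=3,4$ verified directly.
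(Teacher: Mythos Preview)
The paper does not prove this statement: it is presented explicitly as a conjecture, verified only numerically for $m\le 11$, with no proof offered. So there is nothing in the paper to compare your argument against, and your transfer-matrix plan, if it works, would actually settle an open problem rather than reproduce a known proof.

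Your approach to the closed form is sound. The reduction via Proposition~\ref{prop:cphyp} to counting proper $\lambda$-colorings of $C_m$ with $x_i\neq i$ is correct, and the inclusion--exclusion expansion over $S\subseteq[m]$ together with the decomposition $\alpha_\ell=(\lambda-1)^\ell/\lambda-(-1)^\ell/\lambda$ does localize the problem to a transfer matrix with the quadratic factor $x^2-(\lambda-3)x-(\lambda-1)$ you state. The two corrections you identify are also right: the $|S|=1$ terms contribute an extra $-m(-1)^m$ beyond what the transfer matrix sees, and the $S=\emptyset$ term differs from the ``all-$\pm$'' closed walks by $(\lambda-2)(-1)^m$, giving $\chi_m(\lambda)=\operatorname{tr}((T')^m)+(-1)^m(\lambda-m-2)$ and hence the conjectured closed form after shifting $\lambda\mapsto\lambda+1$. (In fact a $2\times2$ transfer matrix already suffices, with the same nonzero eigenvalues.)

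There is, however, a genuine obstruction in your final step. The recurrence printed in the conjecture is \emph{not} the one that follows from the closed form: the characteristic equation $x^2-(\lambda-3)x-(\lambda-1)=0$ forces
\[
\chi_{m+2}(\lambda)=(\lambda-3)\,\chi_{m+1}(\lambda)+(\lambda-1)\,\chi_m(\lambda)+(-1)^m(-2\lambda+m+3),
\]
whereas the conjecture states coefficients $(\lambda-1)$, $(\lambda+1)$ and inhomogeneous part $(-1)^m(-2\lambda+m-1)$. These differ by the substitution $\lambda\mapsto\lambda+2$; equivalently, the paper's displayed recurrence is correct for $\chi_m(\lambda+2)$ but not for $\chi_m(\lambda)$. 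One can check this directly: from the stated $\chi_3$ and $\chi_4$, the paper's recurrence gives $\chi_5(\lambda)=\lambda^5-8\lambda^4+31\lambda^3-71\lambda^2+95\lambda-56$, while the closed form (and a direct inclusion--exclusion count) gives $\chi_5(\lambda)=\lambda^5-10\lambda^4+45\lambda^3-115\lambda^2+169\lambda-116$. So your plan to ``read off'' the stated recurrence from the characteristic polynomial of $T'$ will fail---not because your method is wrong, but because the recurrence as printed appears to contain a variable-shift typo. Your argument proves the closed form and the corrected recurrence above.
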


\begin{figure}[ht]
	\centering
	\begin{tikzpicture}[scale=1.6]
\def\a{3}
\def\b{1}
\def\c{0.5}

\foreach \s in {1,2,3,4,5,6}
{
	\coordinate (a\s) at (60*\s:1);
	\coordinate (b\s) at (60*\s:\c);
	\draw (a\s) -- (b\s);
	\draw[fill=black] (a\s) circle (1.5pt);
	\draw[fill=white] (b\s) circle (1.5pt);
};
\draw (a1) -- (a2) -- (a3) -- (a4) -- (a5) -- (a6) -- (a1);
\end{tikzpicture}
	\caption{A network with boundary nodes marked in white.}
	\label{fig:hexnet}
\end{figure}

\label{eg:hexwheel}
\end{eg}

\subsection{Proof of Theorem \ref{thm:cpintro}}
\label{sec:bcc}

Let $M$ be a matroid on $E$. With respect to a given ordering of $E$, a \emph{broken circuit} of $M$ is a set $C\setminus \min (C)$, where $C$ is a circuit of $M$. The \emph{broken circuit complex} of $M$ is the abstract simplicial complex
\begin{equation}
	\bc(M)=\{S\subseteq E : S\mbox{ contains no broken circuit of }M\}.
\end{equation}
The $f$-vector of $\bc(M)$ does not depend on the ordering of $E$:

\begin{prop}[{\cite[Theorem 4.12]{stanley2007}}]
With respect to any ordering of $E$, the number of $i$-element sets in $\bc(M)$ is $(-1)^i$ times the coefficient of $\l^{\rk(M)-i}$ in $\cp{M}(\l)$.
\label{prop:broken}
\end{prop}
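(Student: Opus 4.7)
The plan is to prove Proposition \ref{prop:broken} by induction on $|E(M)|$, paralleling the deletion-contraction recursion for $\cp{M}$. Let $e$ denote the maximum element of $E(M)$ under the given order, and set $F_M(\l) := \sum_{i \geq 0} (-1)^i n_i(M) \l^{\rk(M)-i}$, where $n_i(M)$ is the number of $i$-element sets in $\bc(M)$. The goal is $F_M = \cp{M}$.

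The base cases are handled directly: when $E(M) = \emptyset$, both sides equal $1$; when $M$ has a loop $\ell$, the singleton $\{\ell\}$ is a circuit with broken circuit $\emptyset$, so $\bc(M) = \emptyset$ forces $F_M = 0$, matching $\cp{M} = 0$. For the inductive step I would split on whether $e$ is a coloop. If $e$ is a coloop, then $M = (M \setminus e) \oplus \{e\}$, no circuit contains $e$, and broken circuits of $M$ and $M \setminus e$ coincide; partitioning $\bc(M)$ according to whether a set contains $e$ then gives $n_i(M) = n_i(M\setminus e) + n_{i-1}(M\setminus e)$, which combined with $\rk(M) = \rk(M\setminus e)+1$ yields $F_M(\l) = (\l-1) F_{M\setminus e}(\l) = (\l-1)\cp{M\setminus e}(\l) = \cp{M}(\l)$.

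If $e$ is neither a loop nor a coloop, then $\cp{M}(\l) = \cp{M\setminus e}(\l) - \cp{M/e}(\l)$, and it suffices to verify the same identity for $F_M$. I would partition $\bc(M)$ by the presence of $e$ and establish two matching claims: (i) the sets avoiding $e$ form exactly $\bc(M\setminus e)$, because $e$ is the maximum element and hence is never the minimum of a circuit, so the broken circuits of $M\setminus e$ are precisely the broken circuits of $M$ that avoid $e$; (ii) the map $X \mapsto X \setminus e$ is a bijection from the NBC sets of $M$ containing $e$ onto $\bc(M/e)$, under the induced order on $E(M)\setminus e$. Combining these identities with the rank shifts $\rk(M\setminus e) = \rk(M)$ and $\rk(M/e) = \rk(M) - 1$ would give $F_M = F_{M\setminus e} - F_{M/e}$, and the induction closes.

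The main obstacle is verifying (ii). A circuit of $M/e$ is either of the form $C \setminus e$ for a circuit $C$ of $M$ containing $e$, or a circuit of $M$ avoiding $e$ that remains minimal among $(M/e)$-dependent sets. Because $e$ is the maximum of the order, $\min(C \setminus e) = \min(C)$ whenever $e \in C$, so deleting $e$ from a broken circuit of $M$ containing $e$ yields a broken circuit of $M/e$; and broken circuits of $M/e$ coming from circuits of $M$ disjoint from $e$ are broken circuits of $M$. The delicate point is proving the equivalence $X \cup e$ contains a broken circuit of $M$ $\iff$ $X$ contains a broken circuit of $M/e$ for $X \subseteq E(M)\setminus e$: the non-obvious direction requires a strong circuit-elimination argument in $M$ to produce, from a hypothesized non-minimal witness, an actual broken circuit whose minimum is preserved under passage between $M$ and $M/e$.
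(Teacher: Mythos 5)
The paper does not prove this proposition; it cites it directly from Stanley, so there is no argument in the paper to compare against. Your deletion-contraction skeleton is the standard route and it does work, but the discussion surrounding claim (ii) contains one misstep and one overcomplication.

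The misstep: ``deleting $e$ from a broken circuit of $M$ containing $e$ yields a broken circuit of $M/e$'' is too strong. If $C$ is a circuit of $M$ with $e\in C$, then $C\setminus e$ is dependent in $M/e$ but need not be a circuit, so $(C\setminus\min C)\setminus e$ need not itself be a broken circuit of $M/e$; it merely \emph{contains} one. This is precisely where the bijection needs care, and you do later identify it as the delicate point, so the earlier sentence should be weakened.

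The overcomplication: strong circuit elimination is not needed. The hard direction is: if $B=C\setminus\min C\subseteq X\cup e$ for a circuit $C$ of $M$, then $X$ contains a broken circuit of $M/e$. Only three elementary facts suffice. (a) Every dependent set contains a circuit. (b) Since $e$ is not a loop, $\rk_{M/e}(A)=\rk_M(A\cup e)-1$; a one-line rank computation then shows that $C$ (when $e\notin C$) and $C\setminus e$ (when $e\in C$) are dependent in $M/e$. (c) If $\emptyset\ne D\subseteq C$, then $D\setminus\min D\subseteq C\setminus\min C$: either $\min C\in D$, which forces $\min D=\min C$, or $\min C\notin D$, which forces $D\subseteq C\setminus\min C$. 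Now set $A=C$ or $A=C\setminus e$ according to whether $e\in C$; by (b) it is dependent in $M/e$, by (a) it contains a circuit $D$ of $M/e$, and by (c) together with $\min(C\setminus e)=\min C$ (because $e$ is maximal), $D\setminus\min D\subseteq A\setminus\min A\subseteq B\setminus e\subseteq X$. The reverse implication is the easy one you already sketched. This closes the bijection $X\mapsto X\setminus e$, yields $n_i(M)=n_i(M\setminus e)+n_{i-1}(M/e)$, and the recursion $F_M=F_{M\setminus e}-F_{M/e}$ follows as you wrote it; with the base and coloop cases (which are correct) the induction is complete.
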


We also consider the \emph{reduced broken circuit complex} $\rbc(M)$, obtained from $\bc(M)$ by deleting the minimal element of $E$ and all faces containing it. Every facet of $\bc(M)$ contains the minimal element of $E$, so $\bc(M)$ is easily recovered from $\rbc(M)$.

\begin{cor}
With respect to any ordering of $E$, the number of $i$-element sets in $\rbc(M)$ is $(-1)^i$ times the coefficient of $\l^{\rk(M)-i-1}$ in $\rcp{M}(\l)$.
\label{cor:rbroken}
\end{cor}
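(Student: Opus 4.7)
The plan is to deduce the corollary from Proposition~\ref{prop:broken} together with a simple face-counting identity between $\bc(M)$ and $\rbc(M)$. Fix an ordering of $E(M)$ and let $e$ denote its minimum element. The first observation is that $e$ cannot belong to any broken circuit: if $e \in C \setminus \min(C)$ for some circuit $C$, then $\min(C) < e$, contradicting the minimality of $e$ in $E(M)$. (We may assume $M$ has no loops, since otherwise $\cp{M} = 0$ and both sides of the claimed identity vanish.)

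Consequently, for any face $X \in \bc(M)$ with $e \notin X$, the set $X \cup \{e\}$ still contains no broken circuit, so it lies in $\bc(M)$ as well. This gives a bijection between the $(i-1)$-element faces of $\rbc(M)$ and the $i$-element faces of $\bc(M)$ containing $e$, with inverse $Y \mapsto Y \setminus \{e\}$. Writing $b_i$ (resp.\ $r_i$) for the number of $i$-element faces of $\bc(M)$ (resp.\ $\rbc(M)$) and partitioning the former according to whether $e$ is present, we obtain
\begin{equation*}
b_i = r_i + r_{i-1} \qquad \text{for all } i \geq 0,
\end{equation*}
where we set $r_{-1} = 0$.

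Now apply Proposition~\ref{prop:broken} to expand $\cp{M}(\l) = \sum_i (-1)^i b_i\, \l^{\rk(M) - i}$, substitute $b_i = r_i + r_{i-1}$, and re-index the $r_{i-1}$ summation by $j = i - 1$. The two resulting sums combine telescopically into the factorization
\begin{equation*}
\cp{M}(\l) = (\l - 1) \sum_i (-1)^i r_i\, \l^{\rk(M) - i - 1}.
\end{equation*}
Dividing by $\l - 1$ identifies $\rcp{M}(\l)$ with $\sum_i (-1)^i r_i\, \l^{\rk(M) - i - 1}$, and reading off coefficients yields $r_i = (-1)^i [\l^{\rk(M)-i-1}]\, \rcp{M}(\l)$, as claimed.

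There is no real obstacle here beyond keeping the face-counting identity correct; the proof is essentially a one-line telescoping argument once the bijection on faces through adjunction of $e$ is observed. The only point to check is that $e$ is in no broken circuit, which is immediate from the definition of broken circuit and the choice of $e$.
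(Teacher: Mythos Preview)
Your argument is correct and is precisely the standard deduction the paper leaves implicit by stating the result as an unproven corollary: the paper already notes that every facet of $\bc(M)$ contains the minimal element, which is exactly the content of your bijection, and the telescoping computation you give is the routine way to pass from $\cp{M}$ to $\rcp{M}$. There is nothing to add.
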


Recall the description of circuits of $M(D)$ from Proposition \ref{prop:circuits}. Note that the circuits of type (C) come in two flavors: one contains 3 distinct crossings, while the other contains only 2. These are illustrated in Figure \ref{fig:circuits}. Circuits of type (C) containing only 2 distinct crossings are either disconnected, as pictured, or connected with both crossings meeting at a single boundary node.

\begin{figure}[ht]
	\centering
		\begin{tikzpicture}
	\def\b{25}
	\coordinate (o) at (0,0);
	\coordinate (a1) at (180-\b:1);
	\coordinate (a3) at (180-\b:2);
	\coordinate (b1) at (180+\b:1);
	\coordinate (b2) at (180+\b:2);
	\coordinate (c1) at (0:1);
	\coordinate (c2) at (0:2);
	\coordinate (c3) at (0:3);
	\draw (o) -- (a3);
	\draw (o) -- (b2);
	\draw (o) -- (c3);
	\draw[fill=black] (o) circle (2.5pt);
	\draw[fill=black] (a1) circle (2.5pt);
	\draw[fill=white] (a3) circle (2.5pt);
	\draw[fill=white] (b2) circle (2.5pt);
	\draw[fill=black] (b1) circle (2.5pt);
	\draw[fill=black] (c2) circle (2.5pt);
	\draw[fill=black] (c1) circle (2.5pt);
	\draw[fill=white] (c3) circle (2.5pt);
	
	\def\a{7}
	\coordinate (f1) at (\a-2,0.8);
	\coordinate (f2) at (\a-3/4,0.4);
	\coordinate (f3) at (\a+3/4,0.4);
	\coordinate (f4) at (\a+2,0.8);
	\coordinate (g1) at (\a-2,-0.8);
	\coordinate (g2) at (\a-1.1,-0.45);
	\coordinate (g3) at (\a,-0.3);
	\coordinate (g4) at (\a+1.1,-0.45);
	\coordinate (g5) at (\a+2,-0.8);
	\draw (f1) -- (f2) -- (f3) -- (f4);
	\draw (g1) -- (g2) -- (g3) -- (g4) -- (g5);
	\draw[fill=white] (f1) circle (2.5pt);
	\draw[fill=black] (f2) circle (2.5pt);
	\draw[fill=black] (f3) circle (2.5pt);
	\draw[fill=white] (f4) circle (2.5pt);
	\draw[fill=white] (g1) circle (2.5pt);
	\draw[fill=black] (g2) circle (2.5pt);
	\draw[fill=black] (g3) circle (2.5pt);
	\draw[fill=black] (g4) circle (2.5pt);
	\draw[fill=white] (g5) circle (2.5pt);
	\end{tikzpicture}
	\caption{Two circuits of type (C) in Proposition \ref{prop:circuits}.}
	\label{fig:circuits}
\end{figure}

\begin{lem}
Fix an ordering of $E$, and extend this ordering to $\eo$ by taking $\eh$ to be minimal. The reduced broken circuit complex $\rbc(M(D))$ is a subcomplex of $\bc(M(\g))$:
\begin{equation}
	\rbc(M(D)) \subseteq \bc(M(\g)).
\end{equation}
	
\begin{proof}
Let $S\in \rbc(M(D))$, so that $S\subseteq E$ contains no broken circuit of $M(D)$. A cycle of $\g$ is a circuit of $M(D)$ of type (B) if it meets at most 1 boundary node, or type (C) if it meets exactly 2 boundary nodes. If a cycle $C$ of $\g$ meets 3 or more boundary nodes, then every element of $C$ is contained in a circuit $Y\subseteq C$ of type (C). Any broken circuit of $M(\g)$ is a cycle of $\g$ minus its minimal element. Thus $S$ contains no broken circuit of $M(\g)$. A broken circuit of $M(D)$ arising from a type (B) circuit is a crossing. Hence $S$ contains no crossing.
	
Now suppose instead that $S\in \bc(M(\g))$ contains no crossing. Since $S$ contains no broken circuit of $M(\g)$, it contains no broken circuit of $M(D)$ arising from a type (B) circuit. Since $S$ contains no crossing, it contains no broken circuit of $M(D)$ arising from a circuit of type (A) or (C). Hence $X$ contains no broken circuit of $M(D)$.
\end{proof}
\label{lem:rbc}
\end{lem}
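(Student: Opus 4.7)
The plan is to exploit the circuit classification of $M(N)$ in Proposition \ref{prop:circuits}. Since $\eh$ is declared minimal in the extended ordering on $\eo$, every face of $\bc(M(N))$ containing $\eh$ disappears upon reduction, so $\rbc(M(N))$ should consist of those $X \subseteq E$ that contain no broken circuit of $M(N)$. I will translate each of the three circuit types into an explicit description of its broken circuit, then match these against the broken circuits of $M(\g)$, which are simply circles of $\g$ with the minimum edge deleted.

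Among the three circuit types, the type~(iii) circuits $Y \cup \eh$ have $\eh$ as their minimum, so their broken circuits are exactly the crossings $Y$ of $N$. Type~(i) circuits are circles of $\g$ meeting at most one boundary node, and their broken circuits are a subset of the broken circuits of $M(\g)$. So for the forward inclusion, given $X \in \rbc(M(N))$, the type~(iii) analysis immediately forces $X$ to contain no crossing, and it remains to show $X \in \bc(M(\g))$. The plan here is to take any circle $C$ of $\g$ and argue that $C \setminus \min C$ contains a broken circuit of $M(N)$. If $C$ meets at most one boundary node, this is direct from the type~(i) description. If $C$ meets $k \ge 2$ boundary nodes, then since $\B$ induces an edgeless subgraph, the $k$ arcs of $C$ between consecutive boundary nodes are crossings of $N$; removing a single edge destroys at most one such arc, so $C \setminus \min C$ still contains a crossing and hence a broken circuit of $M(N)$.

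For the reverse inclusion, given $X \in \bc(M(\g))$ containing no crossing, the only nontrivial obligation is to check that $X$ avoids every type~(ii) broken circuit---a minimal set $C$ containing two crossings, with $\min C$ deleted. The key graph-theoretic observation is that such a broken circuit still contains a crossing: $\min C$ destroys at most one of the two constituent crossings, so in the disjoint case the other survives, and in the overlapping cases (two crossings sharing a subpath, or a cycle meeting two boundary nodes---see Figure~\ref{fig:circuits}) one still extracts a crossing by rerouting through the alternative half of the configuration. Granted this, any such $X$ avoids type~(i) broken circuits because $X \in \bc(M(\g))$ and avoids types~(ii) and~(iii) because both contain crossings.

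The main obstacle will be precisely this last point: verifying that every type~(ii) broken circuit still contains a crossing when the two underlying crossings overlap. The statement is visually clear from the four configurations in Figure~\ref{fig:circuits}, so a short case analysis handles it; a more uniform argument could appeal to the handcuff/theta structure of lift-matroid circuits in Zaslavsky's framework, where a type~(ii) circuit is a tight or loose handcuff of two unbalanced circles and one checks directly that edge deletion preserves an unbalanced circle.
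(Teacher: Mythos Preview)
Your proposal is correct and follows essentially the same approach as the paper: both directions hinge on the circuit classification of Proposition~\ref{prop:circuits}, and both reduce to showing that every broken circuit of $M(\g)$ contains a broken circuit of $M(N)$, while every broken circuit of $M(N)$ of type~(ii) or~(iii) contains a crossing. Your treatment of the type~(ii) case in the reverse inclusion is in fact more explicit than the paper's, which simply asserts that such broken circuits contain a crossing; your arc-counting argument for the forward inclusion (a circle meeting $k\ge 2$ boundary nodes retains a crossing after any single edge deletion) is a clean variant of the paper's observation that such circles contain type~(ii) circuits through every edge.
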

	
\begin{proof}[{Proof of Theorem \ref{thm:cpintro}}]
This follows from Proposition \ref{prop:broken}, Corollary \ref{cor:rbroken} and Lemma \ref{lem:rbc}.
\end{proof}

\begin{eg}
If $\g$ is a path graph and $\B$ consists of the terminal vertices, then the only path between boundary vertices has $n+1$ edges. The last part of Theorem \ref{thm:cpintro} says that we can simply read off the coefficients $a_i$ of $\cp{\g}$ as the coefficients $b_i$ of $\cp{D}$.
\end{eg}

\appendix
\section{Dirichlet matroids and biased graphs}
\label{sec:appa}

A biased graph is a graph with a distinguished set of cycles satisfying a certain linearity condition. There are several ways to form a matroid from the distinguished cycles. We will focus on one of these (the \emph{complete lift matroid}) and apply it to a certain class of biased graphs (the \emph{almost balanced} ones). We assume that $D$ is simple to reduce headache.

\subsection{Background}

A \emph{theta graph} is a graph consisting of 2 ``terminal'' vertices and 3 internally vertex-disjoint paths between the terminals. In other words, a theta graph resembles the symbol $\theta$ (see Figure \ref{fig:thetagraph}). A set $\BB$ of cycles of $\g$ is a \emph{linear subclass} of $\g$ if, for any 2 distinct cycles in $\BB$ belonging to a theta subgraph $H$ of $\g$, the third cycle of $H$ also belongs to $\BB$.

\begin{figure}[ht]
	\centering
	\begin{tikzpicture}[scale=1.8]
\coordinate (a) at (0,0);
\coordinate (b) at (2,0);
\coordinate (c1) at (0.5,-0.1);
\coordinate (c2) at (1,0.033);
\coordinate (c3) at (1.5,-0.033);
\coordinate (d1) at (1/3,0.37);
\coordinate (d2) at (1,0.56);
\coordinate (d3) at (5/3,0.44);
\coordinate (e1) at (4/15,-0.4);
\coordinate (e2) at (11.6/15,-0.6);
\coordinate (e3) at (18.4/15,-0.6);
\coordinate (e4) at (26/15,-0.4);
\draw (a) -- (c1) -- (c2) -- (c3) -- (b);
\draw (a) -- (d1) -- (d2) -- (d3) -- (b);
\draw (a) -- (e1) -- (e2) -- (e3) -- (e4) -- (b);
\foreach \s in {a,b,c1,c2,c3,d1,d2,d3,e1,e2,e3,e4}
{ \draw[fill=black] (\s) circle (1.2pt);};
\end{tikzpicture}
	\caption{A theta graph.}
	\label{fig:thetagraph}
\end{figure}

A \emph{biased graph} is a pair $\bg=(\g,\BB)$ where $\BB$ is a linear subclass of $\g$. The cycles in $\BB$ are called \emph{balanced}; all others are \emph{unbalanced}. An edge set or subgraph $X$ is \emph{balanced} if every cycle of $\g$ contained in $X$ is balanced; otherwise $X$ is \emph{unbalanced}.

There are three matroids typically associated to a biased graph $\bg$ \cite{zaslavsky1991}. We are interested in the \emph{complete lift matroid} $L_0(\bg)$ of $\bg$. This is the matroid on $\eo = E\cup \eh$ with rank function given by
\begin{equation}
	\rk_{L_0(\bg)}(X) = 
	\begin{cases}
		|V| - c(X) & \mbox{if } X \subseteq E \mbox{ is balanced}\\
		|V| - c(X) + 1 & \mbox{if } X \subseteq E \mbox{ is unbalanced or } \eh \in X,
	\end{cases}
\end{equation}
where $c(X)$ is the number of components of the graph $(V,X)$.

\subsection{Bias representation}
\label{sec:bias}

For any $S\subseteq E$, let $\bg/S=(\g/S, \BB/S)$, where $\g/S$ is obtained from $\g$ by contracting $S$ and
\begin{multline}
	\BB/S = \{C \in \BB : C \subseteq E \setminus S
		\mbox{ and } C \mbox{ is a cycle of } \g/S\}\\
	\cup \{C \setminus S : C\in \BB \mbox{ and }
		C \cap S \mbox{ is a simple path} \}.
\end{multline}

Let $\mg$ be the graph obtained from $\g$ by adding an edge between each pair of boundary nodes, and let $\me$ be the set of added edges. We associate to $D$ the biased graph
\begin{equation*}
	\bg(D) = \bg(\mg)/\me
\end{equation*}
and denote the underlying graph by $\og$. A cycle $C$ of $\bg(D)$ is unbalanced if and only if $C$ is a crossing of $D$. We write
\begin{equation*}
	L_0(D)=L_0(\bg(D)).
\end{equation*}

\begin{eg}
Recall the network $D$ from Example \ref{eg:hexwheel}. The case $|\B|=6$ is illustrated on the left-hand side of Figure \ref{fig:hexwheel}. In this example there is only one balanced cycle of $\bg(D)$, and it is the unique cycle of $\g$.
	
\begin{figure}[ht]
	\centering
	\begin{tikzpicture}[scale=1.6]
\def\a{2.7}
\def\b{1}
\def\c{0.5}

\coordinate (d) at (2*\a,0);
\draw[fill=black] (d) circle (1.5pt);

\foreach \s in {1,2,3,4,5,6}
{
	\coordinate (a\s) at (60*\s:1);
	\coordinate (b\s) at (60*\s:\c);
	\draw (a\s) -- (b\s);
	\draw[fill=black] (a\s) circle (1.5pt);
	\draw[fill=white] (b\s) circle (1.5pt);
	\coordinate (c\s) at ($(2*\a,0)+ \b*(a\s)$);
	\draw[fill=black] (c\s) circle (1.5pt);
	\draw (c\s) -- (d);
	\coordinate (e\s) at ($(\a,0)+ \b*(a\s)$);
	\coordinate (f\s) at ($(\a,0)+ \b*(b\s)$);
	\draw[fill=black] (e\s) circle (1.5pt);
	\draw[fill=black] (f\s) circle (1.5pt);
};
\draw (a1) -- (a2) -- (a3) -- (a4) -- (a5) -- (a6) -- (a1);
\draw (c1) -- (c2) -- (c3) -- (c4) -- (c5) -- (c6) -- (c1);

\draw (e1) -- (e2) -- (e3) -- (e4) -- (e5) -- (e6) -- (e1);
\draw (f1) -- (f2) -- (f3) -- (f4) -- (f5) -- (f6) -- (f1);
\draw (e1) -- (f1);
\draw (e2) -- (f2);
\draw (e3) -- (f3);
\draw (e4) -- (f4);
\draw (e5) -- (f5);
\draw (e6) -- (f6);
\draw (f1) -- (f3);
\draw (f1) -- (f4);
\draw (f1) -- (f5);
\draw (f2) -- (f4);
\draw (f2) -- (f5);
\draw (f2) -- (f6);
\draw (f3) -- (f5);
\draw (f3) -- (f6);
\draw (f4) -- (f6);
\end{tikzpicture}
	\caption{Left to right: a network $D$ with boundary nodes marked
		in white, the graph $\mg$, and the graph $\og$.}
	\label{fig:hexwheel}
\end{figure}
\end{eg}

There is a characterization of the biased graphs $\bg(D)$ due to \cite{zaslavsky1987}. For $i\in V$ let $\bg\setminus i$ be the biased graph obtained by deleting $i$ and all edges incident to $i$. If $\bg$ is unbalanced but $\bg\setminus i$ is balanced, then $i$ is called a \emph{balancing vertex} of $\bg$. A biased graph with a unique balancing vertex is called \emph{almost balanced}. We now have all the terms needed to characterize Dirichlet matroids in terms of biased graphs.

\begin{thm}
The classes of simple Dirichlet matroids and simple complete lift matroids of connected almost balanced biased graphs are equal. In particular, we have $M(D)=L_0(D)$.

\begin{proof}
This follows from \cite[Proposition 1]{zaslavsky1987} and Proposition \ref{prop:samemat} below.
\end{proof}
\end{thm}

Propositions \ref{prop:circuits}, \ref{prop:bases} and \ref{prop:cocircuits} now follow from parts (e), (g) and (h) of \cite[Theorem 3.1]{zaslavsky1991}, respectively.

\subsection{Gain representation}

A \emph{gain graph} $\Phi=(\g,\gr,\phi)$ consists of
\begin{enumerate}[(i)]
	\item A graph $\g$
	\item A group $\gr$
	\item A function $\phi:V\times V\to \gr$ such that $\phi(i,j)=\phi(j,i)^{-1}$ for all $(i,j)$.
\end{enumerate}
If $ij\in E$, then we think of $(i,j)$ as the edge $ij$ oriented from $i$ to $j$.
	
For any cycle $C$ of $\g$, order the vertices of $C$ in a cycle as $i_1,\ldots,i_\ell=i_1$, and write \begin{equation}\phi(C)=\phi(i_1,i_2)\phi(i_2,i_3)\cdots\phi(i_{\ell-1},i_\ell).\end{equation}
In general, the element $\phi(C)$ depends on the choice of starting vertex and direction, unless $\phi(C)$ is the identity. Let
\begin{equation}
	\BB= \{ C \subseteq E: C \mbox{ is a cycle of } \g
	\mbox{ with } \phi(C) \mbox{ the identity of } \gr \}.
\end{equation}
The set $\BB$ is a linear subclass of $\g$. Thus every gain graph defines a biased graph whose set of balanced cycles is $\BB$.

Let $\KK$ be the additive group of a field. For any $u\in \KK^{\B}$, let $\Phi(\g,u)=(\og,\KK,\phi)$, where $\og$ is defined as above and $\phi:V\times V\to \KK$ is given by
\begin{equation}
	\phi(i,j) = 
	\begin{cases}
		u_j & \mbox{if } ij \in \be \mbox{ with } j \in \B\\
		-u_i & \mbox{if } ij \in \be \mbox{ with } i\in \B\\
		0 & \mbox{else}.
	\end{cases}
\end{equation}
Clearly $\Phi(\g,u)$ is a gain graph.

\begin{eg}
Consider the graph $\g$ on the left side of Figure \ref{fig:gg} with boundary nodes marked in white and values of $u$ labeled. The associated gain graph $\Phi(\g,u)$ is illustrated on the right side of Figure \ref{fig:gg}. An edge oriented from $i$ to $j$ with label $k$ means that $\phi(i,j)=k$.
	
\begin{figure}[ht]
	\centering
	\begin{tikzpicture}[scale=1.8]
\tikzset{->-/.style={decoration={
			markings,
			mark=at position .5 with {\arrow{>}}},postaction={decorate}}}
	\def\c{0.75}
	\coordinate (a) at ({-2*\c},0);
	\coordinate (b) at ({-3*\c},0);
	\coordinate (h) at (0,0);
	\draw[fill=black] (a) circle (1.45pt);
	\draw[fill=black] (b) circle (1.45pt);
	\draw[fill=black] (h) circle (1.45pt);
	\draw[->-] (b) -- (a);
	\draw[->-] (a) .. controls ({-1.33*\c},0.2) and ({-0.66*\c},0.2) .. (h);
	\draw[->-] (a) .. controls ({-1.33*\c},-0.2) and ({-0.66*\c},-0.2) .. (h);
	\draw[->-] (b) .. controls ({-2.7*\c},1) and  ({-0.4*\c},1) .. (h);
	\draw[->-] (b) .. controls ({-2.7*\c},-1) and ({-0.4*\c},-1) .. (h);
	\node at ({-2.5*\c},0) [below = 1.1mm] {$1$};
	\node at (-0.1,-0.5) {$a$};
	\node at (-0.08,0.47) {$b$};
	\node at (-1.08,0.32) {$b$};
	\node at (-1.1,-0.35) {$a$};
	

	\def\a{-4.5}
	\def\b{0.47}
	\coordinate (A) at ({\a-\b*2.2},0);
	\coordinate (B) at (\a,{\b*1.45});
	\coordinate (C) at (\a,{-1.45*\b});
	\coordinate (D) at ({\a+\b*2.2},0);
	\draw (A) -- (B) -- (D) -- (C) -- (A);
	\draw (B) -- (C);
	\node at (A) [below = 1.2mm] {$a$};
	\node at (D) [below = 1.2mm] {$b$};
	\draw[fill=white] (A) circle (1.45pt);
	\draw[fill=black] (B) circle (1.45pt);
	\draw[fill=black] (C) circle (1.45pt);
	\draw[fill=white] (D) circle (1.45pt);
	
\end{tikzpicture}
	\caption{A network and the associated gain graph.}
	\label{fig:gg}
\end{figure}
\end{eg}
	
\subsection{Block injectivity and equivalence of representations}

Recall the notion of a \emph{block} of $D$ from Definition \ref{def:block}.

\begin{mydef}
We say that $u\in \KK^{\B}$ is \emph{block injective} if for every block $U$ of $D$ the values $u_i$ are distinct for all $i\in U\cap \B$.
\end{mydef}

A cycle $C$ of the gain graph $\Phi(\g,u)$ is unbalanced if and only if $C$ is a crossing of $D$ between boundary nodes on which $u$ takes distinct values, so $\Phi(\g,u)$ is independent of $u$, as long as $u$ is block injective. Write
\begin{equation*}
	\Phi(D)=\Phi(\g,u),
\end{equation*}
where $u$ is block injective. The following proposition justifies our claim in Example \ref{eg:dirarr} that the matroids underlying Dirichlet arrangements are the simple Dirichlet matroids.

\begin{prop}
The following matroids on $\eo$ are equal:
\begin{enumerate}[(a)]
	\item $M(D)$
	\item $L_0(\bg(D))$
	\item $L_0(\Phi(D))$
\end{enumerate}
If $D$ is simple and $u\in \KK^{\B}$ is block injective, then we can add:
\begin{enumerate}[(a)]
	\setcounter{enumi}{3}
	\item The matroid defined by the Dirichlet arrangement $\AA_D(u)$.
\end{enumerate}
	
\begin{proof}
A cycle of $\bg(D)$ or $\Phi(D)$ is unbalanced if and only if it is a crossing of $D$. Hence $\Phi(D)=\bg(D)$ as biased graphs, proving the equivalence of (b) and (c). The equivalence of (c) and (d) follows from \cite[Theorem 4.1(a)]{zaslavsky2003}. In addition it follows that $\AA(\g,u)$ defines the same matroid for any field $\KK$ with a block-injective boundary data $u$, proving the equivalence of (a) and (d).
\end{proof}
\label{prop:samemat}
\end{prop}

\subsection{Proof of Proposition \ref{prop:representable}}
\label{sec:reppf}

We need a description of the single-element contractions of $M(D)$.

\begin{lem}
If $e\in E$, then $M(D)/e = L_0(\bg(D)/e)$, where $\bg(D)/e$ is the biased graph with underlying graph $\og/e$ and in which a cycle $C\subseteq E\setminus e$ of $\og/e$ is balanced if and only if $C\cup e$ is a balanced cycle of $\bg(D)$.
	
\begin{proof}
The result follows from the discussion in \cite[p. 38]{zaslavsky1989}.
\end{proof}
\label{lem:contract}
\end{lem}

\begin{proof}[Proof of Proposition \ref{prop:representable}]
Suppose that $|\KK|\geq \omega(D)$, so that there exists a block-injective $u\in \KK^{\B}$. Proposition \ref{prop:samemat} implies that $M(D)$ is representable over $\KK$, since any hyperplane representation over $\KK$ gives a matrix representation over $\KK$.
		
Now suppose that $|\KK|<\omega(D)$, and let $U$ be a block with $\omega(D) = |U\cap \B|$. Let $X\subseteq E$ be the set of all edges with both endpoints in $U$. Let $\partial X\subseteq X$ be the set of all edges with one endpoint in $U\cap \B$. Deleting all edges in $E\setminus X$ and contracting all edges in $X\setminus\partial X$ yields the star network $S_{\omega(D)+1}$ (see Example \ref{eg:star}). Since $M(S_d)\cong U_{2,d}$, we obtain $U_{2,\omega(D)+1}$ as a minor of $M(D)$ by Lemma \ref{lem:contract}. But $U_{2,\omega(D)+1}$ is not a minor of any matroid representable over $\KK$ \cite[Corollary 6.5.3]{oxley2006}.
\end{proof}

\section{Complete principal truncations}
\label{sec:printrun}

Let $M$ be a matroid on $E$ and $F$ a flat of rank $r$. Let $M_F$ denote the \emph{complete principal truncation} of $M$ along $F$, obtained by freely adding an $(r-1)$-element independent set $I$ to $F$ and contracting $I$ (see \cite[p. 266]{oxley2006} for details). Note that $F$ is a parallel class of $M_F$; we choose a representative $e\in F$ and treat $M$ as a matroid on $(E\setminus F)\cup e$ by deleting $E\setminus e$.

Let $\mg$ denote the graph obtained from $\g$ by adding an edge between each pair of boundary vertices. Let $E(\B)$ denote the set of added edges.

\begin{thm}
The Dirichlet matroid $M(D)$ is isomorphic to the complete principal truncation of $M(\mg)$ along $E(\B)$.
	
\begin{proof}
This follows from the discussion in \cite[Section 6]{lutz2019hyp}.
\end{proof}
\end{thm}

The following result gives an alternative proof of Proposition \ref{prop:hpp}, although it bypasses the connection to the response matrix.

\begin{prop}
If a matroid $M$ has the half-plane property, then so does the complete principal truncation of $M$ along any flat.
	
\begin{proof}
This follows from Propositions 4.11 and 4.12 of \cite{choe2004}.
\end{proof}
\end{prop}

\section*{Acknowledgments}
The author thanks H\'{e}l\`{e}ne Barcelo, Trevor Hyde, Jeffrey C. Lagarias and Thomas Zaslavsky for comments on drafts of this article; Anastasia Chavez, Christopher Eur and James G. Oxley for discussions and references on matroid quotients; and the anonymous referee for suggestions and improvements.

\bibliographystyle{abbrv}
\bibliography{lutz-mafen}
\end{document}